\theoremstyle{plain}
\newtheorem{theo}{Theorem}[section]
\crefname{theo}{Theorem}{Theorems}
\Crefname{theo}{Theorem}{Theorems}
\crefname{prop}{Proposition}{Propositions}
\Crefname{prop}{Proposition}{Propositions}
\newtheorem{lem}[theo]{Lemma}
\crefname{lem}{Lemma}{Lemmas}
\Crefname{lem}{Lemma}{Lemmas}
\newtheorem{cor}[theo]{Corollary}
\crefname{cor}{Corollary}{Corollaries}
\Crefname{cor}{Corollary}{Corollaries}
\crefname{claim}{Claim}{Claims}
\Crefname{claim}{Claim}{Claims}
\crefname{property}{Property}{Properties}
\Crefname{property}{Property}{Properties}
\crefname{problem}{Problem}{Problems}
\Crefname{problem}{Problem}{Problems}
\theoremstyle{definition}
\crefname{defi}{Definition}{Definitions}
\Crefname{defi}{Definition}{Definitions}
\crefname{notation}{Notation}{Notations}
\Crefname{notation}{Notation}{Notations}
\crefname{convention}{Convention}{Conventions}
\Crefname{convention}{Convention}{Conventions}
\crefname{cond}{Condition}{Conditions}
\Crefname{cond}{Condition}{Conditions}
\crefname{assum}{Assumption}{Assumptions}
\Crefname{assum}{Assumption}{Assumptions}
\theoremstyle{remark}
\newtheorem{rem}[theo]{Remark}
\crefname{rem}{Remark}{Remarks}
\Crefname{rem}{Remark}{Remarks}
\crefname{ex}{Example}{Examples}
\Crefname{ex}{Example}{Examples}
\crefname{section}{Section}{Sections}
\Crefname{section}{Section}{Sections}
\crefname{subsection}{Subsection}{Subsections}
\Crefname{subsection}{Subsection}{Subsections}
\crefname{figure}{Figure}{Figures}
\Crefname{figure}{Figure}{Figures}
\newtheorem*{acknowledgement}{Acknowledgement}
\newcommand{\Z}{\mathbb{Z}}
\newcommand{\R}{\mathbb{R}}
\newcommand{\C}{\mathbb{C}}
\newcommand{\CP}{\mathbb{CP}}
\newcommand{\RP}{\mathbb{RP}}
\newcommand{\gauge}{{\mathscr G}}
\newcommand{\calK}{{\mathcal K}}
\newcommand{\calV}{{\mathcal V}}
\newcommand{\calH}{{\mathcal H}}
\newcommand{\calW}{{\mathcal W}}
\newcommand{\fraks}{\mathfrak{s}}
\newcommand{\Diff}{\mathrm{Diff}}
\newcommand{\Homeo}{\mathrm{Homeo}}
\newcommand{\Aut}{\mathrm{Aut}}
\newcommand{\Si}{\Sigma}
\newcommand{\inc}{\hookrightarrow}
\newcommand{\im}{\mathop{\mathrm{Im}}\nolimits}
\newcommand{\rank}{\mathop{\mathrm{rank}}\nolimits}
\newcommand{\Hom}{\mathop{\mathrm{Hom}}\nolimits}
\newcommand{\Spin}{\operatorname{\mathrm {Spin}}}
\newcommand{\Spincm}{\Spin^{c_-}}
\newcommand{\OO}{\operatorname{\rm O}}
\newcommand{\Pin}{\mathrm{Pin}}
\newcommand{\fake}{\mathrm{fake}}
\title[Families of smooth 4-manifolds and $\Pin^{-}(2)$-monopole]{Constraints on families of smooth 4-manifolds from $\Pin^{-}(2)$-monopole}
\author{Hokuto Konno}
\address{2-1 Hirosawa, Wako, Saitama 351-0198, Japan}
\email{hokuto.konno@riken.jp}
\author{Nobuhiro Nakamura}
\address{Department of Mathematics, Osaka Medical College, 2-7 Daigaku-machi, Takatsuki City, Osaka, 569-8686, Japan}
\email{mat002@osaka-med.ac.jp}
\begin{document}

\maketitle

\begin{abstract}
Using the Seiberg--Witten monopole equations, Baraglia recently proved that for most of simply-connected closed smooth 4-manifolds $X$, the inclusions $\mathrm{Diff}(X) \hookrightarrow \mathrm{Homeo}(X)$ are not weak homotopy equivalences.
In this paper, we generalize Baraglia's result using the $\Pin^{-}(2)$-monopole equations instead.
We also give new examples of $4$-manifolds $X$ for which $\pi_{0}(\Diff(X)) \to \pi_{0}(\Homeo(X))$ are not surjections.
\end{abstract}

\section{Introduction}

T. Kato and the authors~\cite{KKN} recently made use of Seiberg--Witten theory for families in order to detect non-smoothable topological families of $4$-manifolds.
This argument extracts some homotopical difference between the homeomorphism groups and the diffeomorphism groups of some class of $4$-manifolds.
Soon after \cite{KKN}, using Seiberg--Witten theory for families in a different manner, D. Baraglia~\cite{B} extensively generalized the result in \cite{KKN} on comparisons between the homeomorphism and diffeomorphism groups of $4$-manifolds:
he proved in \cite[Corollary~1.9]{B} that for every closed, oriented, simply-connected, smooth, and indefinite $4$-manifold $M$ with $|\sigma(M)|>8$, the inclusion $\Diff(M) \inc \Homeo(M)$ is not a weak homotopy equivalence.
Here $\sigma(M)$ denotes the signature of $M$, and $\Diff(M)$ and $\Homeo(M)$ denote the groups of diffeomorphisms and homeomorphisms respectively.
The proof of this result by Baraglia is based on a finite-dimensional approximation of the families Seiberg--Witten monopole map.
The purpose of this paper is to give analogues of arguments in \cite{B} by Baraglia for the $\Pin^{-}(2)$-monopole equations introduced in \cite{N1}, and to make use of the $\Pin^{-}(2)$-monopole analogues to generalize the above result by Baraglia on comparison between homeomorphism and diffeomorphism groups as follows:

\begin{theo}
\label{main appl 1}
Let $X$ be a smooth $4$-manifold which is homeomorphic to a $4$-manifold of the form
\begin{align}
\label{eq: MN exporession}
M \#_{i=1}^{p}(S^{1} \times Y_{i})\#_{j=1}^{q} (S^{2} \times \Si_{j}),
\end{align}
where
\begin{itemize}
\item $M$ is a simply-connected, closed, oriented, smooth, and indefinite $4$-manifold with $|\sigma(M)| > 8$;
\item $Y_{i}$ is an oriented closed $3$-manifold, and $\Si_{j}$ is an oriented closed $2$-manifold of positive genus; and
\item $p$ and $q$ are non-negative integers, where we interpret $\#_{i=1}^{p}(S^{1} \times Y_{i})$ as $S^{4}$ for $p=0$, and similarly for $q=0$.
\end{itemize}
Set $n = \min\{b_{+}(M), b_{-}(M)\}$.
If we fix a homeomorphism between $X$ and a $4$-manifold of the form \eqref{eq: MN exporession}, then:
\begin{itemize}
\item If $M$ is non-spin, there exists a non-smoothable $\Homeo(X)$-bundle
\[
X \to E \to T^{n}.
\]
\item If $M$ is spin, there exists a non-smoothable $\Homeo(X)$-bundle
\[
X \to E \to T^{n-1}.
\]
\end{itemize}
\end{theo}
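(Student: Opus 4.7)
The plan is to imitate Baraglia's argument from \cite{B}, with the families Seiberg--Witten invariant replaced by the families $\Pin^{-}(2)$-monopole invariant of \cite{N1}, so that the additional summands $S^{1} \times Y_{i}$ and $S^{2} \times \Si_{j}$ can be accommodated. The argument proceeds in three stages.

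First, I would build the candidate $\Homeo(X)$-bundle over the torus by fiberwise connected sum. Baraglia's construction provides a non-smoothable $\Homeo(M)$-bundle $M \to E_{M} \to T^{n}$ (respectively $T^{n-1}$ in the spin case) arising from $n$ (respectively $n-1$) independent classes detected by the families Seiberg--Witten invariant. Fiberwise connected summing $E_{M}$ at a chosen basepoint section with the trivial bundle whose fiber is
\[
N := \#_{i=1}^{p}(S^{1} \times Y_{i}) \#_{j=1}^{q}(S^{2} \times \Si_{j})
\]
yields the desired $\Homeo(X)$-bundle $E \to T^{n}$ (respectively $T^{n-1}$).

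Second, to establish non-smoothability, I would argue by contradiction. If $E$ were smoothable, one would obtain a smooth family of Riemannian metrics on $X$ together with a compatible family of $\Spincm$-structures, extending a characteristic $\Spincm$-structure on $M$ trivially across $N$. The finite-dimensional approximation of the families $\Pin^{-}(2)$-monopole map (developed earlier in the paper) then produces a $\Pin^{-}(2)$-equivariant stable map between sphere bundles over the base torus, whose equivariant cohomological degree must vanish whenever the family is smooth. The contradiction is derived by computing this invariant directly via a family neck-stretching argument that decouples the $M$-piece from the $N$-piece: the $M$-contribution reduces to Baraglia's non-vanishing classical invariant on $M$, while the $N$-contribution is shown to be a unit.

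The main obstacle, and the bulk of the technical work, is this last reduction. For each $S^{2} \times \Si_{j}$ summand one uses a PSC metric (available because the product $S^{2} \times \Si_{j}$ admits PSC for any closed $\Si_{j}$), eliminating non-reducible solutions on the summand. For each $S^{1} \times Y_{i}$ summand, PSC on the full factor is unavailable, so one must instead choose the $\Spincm$-extension so that the $\Pin^{-}(2)$-monopole moduli space on $S^{1} \times Y_{i}$ contributes trivially to the families invariant, by a combination of index computation and a gluing / neck-stretching formula for the $\Pin^{-}(2)$ setup. This is precisely where the $\Pin^{-}(2)$-framework is essential: ordinary $\Pin(2)$-theory would be swamped by reducibles coming from the new $b_{1}$, whereas the twisted-coefficient Dirac operator underlying the $\Pin^{-}(2)$-equations circumvents this. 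Finally, the spin-versus-non-spin dichotomy, which shaves one dimension off the base torus, reflects the shift in the $\Pin^{-}(2)$-equivariant index coming from the quaternionic structure on the spinor bundle in the spin case, analogous to the enhancement underlying Furuta's $10/8$-theorem.
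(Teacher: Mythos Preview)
Your construction of $E$ as a fiberwise connected sum of Baraglia's bundle over the torus with the trivial $N$-bundle is essentially what the paper does. But your non-smoothability argument misses the central mechanism and substitutes a much harder one that is not actually carried out.

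The paper uses \emph{no} neck-stretching, gluing formula, or analysis of moduli spaces on the individual summands. The entire point of passing to $\Pin^{-}(2)$-monopole theory is a purely cohomological observation: one chooses the double cover $\tilde{X} \to X$ to be trivial over $M$ and nontrivial on each $S^{1}$-factor and each $\Si_{j}$-factor of $N$, and then checks directly that the resulting local system $\ell^{N}$ on $N$ satisfies $b_{2}^{\ell^{N}}(N) = 0$. Consequently $H^{+}(X;\ell) \cong H^{+}(M;\Z)$, so the bundle $H^{+}(E,\ell)$ over the torus is computed entirely from the $M$-piece, and its top (or next-to-top) Stiefel--Whitney class is nonzero by the same elementary calculation as in Baraglia. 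The constraints proved earlier in the paper (\cref{main thm 1,main thm 2}) then say that nonvanishing of $w_{b_{+}^{\ell}}(H^{+}(E,\ell))$ forces $\tilde{c}_{1}(\fraks)^{2} \leq \sigma(X)$ in general, and $\sigma(X) \geq 0$ when $\tilde{c}_{1}(\fraks)=0$; a one-line arithmetic check against $\sigma(M)$ and the chosen $\tilde{c}_{1}(\fraks)$ gives the contradiction. The summand $N$ never enters the analysis beyond the vanishing $b_{2}^{\ell^{N}}(N)=0$.

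Your proposed route would require a families gluing formula for $\Pin^{-}(2)$-monopoles that the paper does not supply, and your description of the obstruction (``the equivariant cohomological degree must vanish whenever the family is smooth'') is not the form the constraint actually takes: the degree $\alpha$ always exists for a smooth family, and the constraint is the divisibility relation $\alpha\, e_{G}(V_{1}) = e_{G}(H^{+}(E,\ell))\, e_{G}(W_{1})$, from which the inequalities are read off. The idea you are missing is precisely the choice of double cover making $N$ invisible in $\ell$-twisted cohomology; once you have it, nothing analytic on $N$ is needed and the spin/non-spin dichotomy comes simply from whether one can take $\tilde{c}_{1}(\fraks)=0$ and hence invoke the $C_{4}$-equivariant constraint of \cref{main thm 2} rather than the $\{\pm1\}$-equivariant constraint of \cref{main thm 1}.
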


Here $b_{+}(M)$ denotes the maximal dimension of positive-definite subspaces of $H^{2}(M;\R)$ with respect to the intersection form, and $b_{-}(M)=b_{2}(M)-b_{+}(M)$.
We say that a $\Homeo(X)$-bundle $E$ is {\it non-smoothable} if $E$ does not admit a reduction of structure to $\Diff(X)$.

By standard obstruction theory, we have:

\begin{cor}
\label{cor homotpy groups}
Let $X$ be a smooth $4$-manifold which is homeomorphic to a $4$-manifold of the form
\[
M \#_{i=1}^{p}(S^{1} \times Y_{i})\#_{j=1}^{q} (S^{2} \times \Si_{j}),
\]
where
\begin{itemize}
\item $M$ is a simply-connected, closed, oriented, smooth, and indefinite $4$-manifold with $|\sigma(M)| > 8$;
\item $Y_{i}$ is an oriented closed $3$-manifold, and $\Si_{j}$ is an oriented closed $2$-manifold of positive genus; and
\item $p$ and $q$ are non-negative integers.
\end{itemize}
Then the inclusion
\[
\Diff(X) \inc \Homeo(X)
\]
is not a weak homotopy equivalence.

More precisely, if we fix a homeomorphism between $X$ and a $4$-manifold of the form \eqref{eq: MN exporession}, then:
\begin{itemize}
\item If $M$ is non-spin,
\[
\pi_{k}(\Diff(X)) \to \pi_{k}(\Homeo(X))
\]
is not an isomorphism for some $k \leq \min\{b_{+}(M), b_{-}(M)\} -1$.
\item If $M$ is spin,
\[
\pi_{k}(\Diff(X)) \to \pi_{k}(\Homeo(X))
\]
is not an isomorphism for some $k \leq \min\{b_{+}(M), b_{-}(M)\} - 2$.
\end{itemize}
\end{cor}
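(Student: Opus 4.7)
The plan is to deduce \cref{cor homotpy groups} from \cref{main appl 1} by a routine obstruction-theoretic argument on classifying spaces. Fix a homeomorphism between $X$ and a manifold of the form \eqref{eq: MN exporession}, set $n = \min\{b_{+}(M), b_{-}(M)\}$, and let $m = n$ in the non-spin case and $m = n-1$ in the spin case. By \cref{main appl 1}, there is a $\Homeo(X)$-bundle $X \to E \to T^{m}$ which admits no reduction of structure to $\Diff(X)$. Equivalently, its classifying map $f \colon T^{m} \to B\Homeo(X)$ does not lift along the natural map $B\Diff(X) \to B\Homeo(X)$, whose homotopy fiber is $\Homeo(X)/\Diff(X)$.

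I would then run obstruction theory for this lifting problem. Since $T^{m}$ admits a CW structure of dimension $m$, the obstructions to extending a partial lift from the $k$-skeleton to the $(k+1)$-skeleton live in
\[
H^{k+1}\bigl(T^{m}; \pi_{k}(\Homeo(X)/\Diff(X))\bigr)
\]
(with local coefficients, if necessary), so non-liftability forces $\pi_{k}(\Homeo(X)/\Diff(X)) \neq 0$ for some $k \in \{0, 1, \ldots, m-1\}$. I would then feed this into the long exact homotopy sequence of the fibration $\Diff(X) \to \Homeo(X) \to \Homeo(X)/\Diff(X)$: if $\pi_{j}(\Diff(X)) \to \pi_{j}(\Homeo(X))$ were an isomorphism for every $j \leq k$, then surjectivity in degree $k$ combined with injectivity in degree $k-1$ would yield $\pi_{k}(\Homeo(X)/\Diff(X)) = 0$, a contradiction. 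Consequently some $j \leq k \leq m - 1$ must witness a failure of $\pi_{j}(\Diff(X)) \to \pi_{j}(\Homeo(X))$ to be an isomorphism, which is exactly the bound $n-1$ in the non-spin case and $n-2$ in the spin case.

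Because this reduction is essentially formal once \cref{main appl 1} is in hand, there is no genuine technical obstacle here; the only points to keep straight are the degree shift between $m$, $n$, and the final upper bound on $k$, together with the mild bookkeeping to handle a possible $\pi_{1}$-action on the fiber's homotopy groups, which does not affect the conclusion that some obstruction group is nontrivial.
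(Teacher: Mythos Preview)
Your proposal is correct and matches the paper's approach: the paper simply asserts that \cref{cor homotpy groups} follows from \cref{main appl 1} ``by standard obstruction theory,'' and what you have written is precisely the standard argument filling in that reference. The degree bookkeeping and the long exact sequence step are handled correctly.
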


\begin{rem}
\label{rem: comparison with B}
Here we compare \cref{main appl 1} and \cref{cor homotpy groups} with Baraglia's argument given in \cite{B}:
\begin{enumerate}
\item The case that $p=q=0$ follows from an argument based on \cite[Theorem~1.1]{B}.
\item The case that $p=0$, $q \leq 2$, and $M$ is spin follows from an argument based on \cite[Theorem~1.2]{B}.
\end{enumerate}
\end{rem}


Instead of a simply-connected $4$-manifold in $M$ in \cref{main appl 1} and \cref{cor homotpy groups}, we may also consider non-simply-connected $4$-manifolds whose homeomorphism types can be understood very well.
We give such an example using Enriques surfaces:

\begin{theo}
\label{main appl 2}
Let $X$ be a smooth $4$-manifold which is homeomorphic to a $4$-manifold of the form
\begin{align*}
mS\#M\#_{i=1}^{p}(S^{1} \times Y_{i})\#_{j=1}^{q} (S^{2} \times \Si_{j}),
\end{align*}
where
\begin{itemize}
\item $S$ is an Enriques surface and $M$ is a standard simply-connected smooth 4-manifold.
Here $M$ is called standard if $M$ is obtained as the connected sum of finitely many (possibly zero) copies of $\CP^{2}$, $-\CP^{2}$, $S^{2} \times S^{2}$, $K3$, and $-K3$.
If $M$ is spin, we assume that $\sigma(M) \leq 0$;
\item $Y_{i}$ is an oriented closed $3$-manifold, and $\Si_{j}$ is an oriented closed $2$-manifold of positive genus; and
\item $m$ is a positive integer, and $p$ and $q$ are non-negative integers, where we interpret $\#_{i=1}^{p}(S^{1} \times Y_{i})$ as $S^{4}$ for $p=0$, and similarly for $q=0$.
\end{itemize}
Set $n=b_{+}(M)+m$.
Then there exists a non-smoothable $\Homeo(X)$-bundle
\[
X \to E \to T^{n}.
\]
\end{theo}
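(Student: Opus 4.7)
The plan is to follow the same scheme as for \cref{main appl 1} but to source the spin$^{c_-}$ structure from the Enriques factors rather than from $M$. Recall that an Enriques surface $S$ has $\pi_{1}(S)=\Z/2$, so $S$ admits a canonical spin$^{c_-}$ structure $\frakt_{S}$ whose associated local system $\ell_{S}\to S$ classifies the unramified double cover $K3\to S$, and the ordinary $\Pin^{-}(2)$-monopole invariant of $(S,\frakt_{S})$ is nonzero by~\cite{N1}. Via the connected-sum construction for spin$^{c_-}$ structures, one assembles $\frakt_{S}$ on each of the $m$ Enriques summands together with a spin$^{c}$ structure on $M$ and the obvious extensions across the $S^{1}\times Y_{i}$ and $S^{2}\times\Si_{j}$ summands into a spin$^{c_-}$ structure $\frakt$ on $X$. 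The hypothesis $\sigma(M)\le 0$ in the spin case is used precisely to choose a spin$^{c}$ structure on $M$ that does not contribute a competing ordinary Seiberg--Witten class.

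Step one is to construct the $n$-parameter $\Homeo(X)$-bundle. I would choose $n$ pairwise commuting homeomorphisms $\phi_{1},\dots,\phi_{n}\in\Homeo(X)$, each supported in a single summand (one of the $m$ Enriques factors or the $M$ factor), whose induced actions on the twisted cohomology $H^{2}(X;\Z_{\ell})$ realize independent reflections across distinct chamber walls in the twisted positive cone. Existence of such homeomorphisms follows from topological realization results of Freedman--Quinn and Wall type, combined with standard facts about the symmetries of the Enriques lattice and the standard simply-connected factors inside $M$. Assembling the $\phi_{i}$ into a $T^{n}$-action produces the desired bundle $X\to E\to T^{n}$.

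Step two is to derive a contradiction from the assumption that $E$ admits a reduction to $\Diff(X)$. Under this assumption, the families $\Pin^{-}(2)$-monopole invariant $\mathrm{SW}^{\Pin^{-}(2)}(E,\frakt)\in\Z/2$, defined through the $\Pin^{-}(2)$-equivariant finite-dimensional approximation set up earlier in the paper, is well defined. On the one hand, a wall-crossing computation parallel to the one used to prove \cref{main appl 1} identifies this invariant modulo $2$ with the product of the ordinary $\Pin^{-}(2)$-monopole invariants of the individual summands, which is nonzero by the choice of $\frakt$. On the other hand, smoothness of $E$ together with a dimension count forces the parametrized moduli space to be empty for generic smooth data, so the invariant must vanish. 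The contradiction proves that $E$ is non-smoothable.

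The main obstacle is the wall-crossing computation outlined in the last step: one needs a multiplicative connected-sum formula for the families $\Pin^{-}(2)$-monopole invariant that correctly mixes ordinary spin$^{c}$ summands (from $M$) with genuine spin$^{c_-}$ summands (from the Enriques factors), and one must verify that the $T^{n}$-family crosses exactly $n$ independent chamber walls with odd total count modulo $2$. Controlling the auxiliary summands is the subtle part: the positive-genus assumption on the $\Si_{j}$ is needed to rule out spurious reducibles on the $S^{2}\times\Si_{j}$ factors, and the assumption $\sigma(M)\le 0$ in the spin case ensures that an otherwise-present ordinary Seiberg--Witten contribution from $M$ does not cancel the Enriques contribution modulo $2$.
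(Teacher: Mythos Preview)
Your proposal diverges from the paper's proof in essentially every structural choice, and several of your claimed ingredients do not exist in the paper.

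First, the local system. You source $\ell$ from the $K3\to S$ cover on the Enriques summands. The paper does the opposite: it takes the \emph{trivial} cover on $M':=mS\#M$ and the nontrivial cover on $N=\#_{i}(S^{1}\times Y_{i})\#_{j}(S^{2}\times\Si_{j})$, exactly as in \cref{main appl 1} with $M$ replaced by $M'$. This is why the positive-genus hypothesis on $\Si_{j}$ matters: it guarantees $H^{1}(S^{2}\times\Si_{j};\Z_{2})\neq 0$, so one can pick a cover with $b_{2}^{\ell^{N}}(N)=0$ and hence $b_{+}^{\ell}(X)=b_{+}(M')=b_{+}(M)+m=n$. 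Your stated reason (``ruling out spurious reducibles'') is not what is used.

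Second, the homeomorphisms. The paper does not appeal to Freedman--Quinn realization of abstract isometries. It uses the topological splitting $S\cong -E_{8}\#(S^{2}\times S^{2})\#W$ (with $W$ the non-spin rational homology sphere with $\pi_{1}=\Z/2$) to exhibit $M'$ topologically as a connected sum containing $n$ copies of $S^{2}\times S^{2}$ (spin case) or $n$ copies of $\CP^{2}$ (non-spin case), and then takes the explicit self-diffeomorphism $\varrho$ on each such summand, extended by the identity. These $n$ commuting maps give the $T^{n}$-bundle $E$, and one checks directly that $w_{n}(H^{+}(E,\ell))\neq 0$.

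Third, and most importantly, the obstruction mechanism. There is no wall-crossing computation in the proof of \cref{main appl 1}, and none here; there is no ``families $\Pin^{-}(2)$-monopole invariant in $\Z/2$'' defined in this paper, and no multiplicative connected-sum formula. The contradiction comes from \cref{main thm 1}: assuming smoothability, one lifts to $\Aut(X,\fraks)$ (via \cref{lem: reduction to Aut connected sum}) for a $\Spincm$-structure $\fraks$ with $\tilde c_{1}(\fraks)=C$ chosen so that $[C]_{2}=w_{2}(M')$; then $w_{n}(H^{+}(E,\ell))\neq 0$ forces $C^{2}\le\sigma(X)$. In the spin case $C$ is supported in the torsion groups $H^{2}(W;\Z)^{\oplus m}$, so $C^{2}=0$ while $\sigma(X)=-8(m+2b)<0$; in the non-spin case $C^{2}=n-n'$ while $\sigma(X)=-8m+n-n'$. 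Either way one contradicts $C^{2}\le\sigma(X)$. The hypothesis $\sigma(M)\le 0$ in the spin case is used only to write $M$ topologically as $a(S^{2}\times S^{2})\#2b(-E_{8})$, not to suppress a Seiberg--Witten contribution as you suggest.

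In short, your Step~2 rests on a formula the paper neither states nor proves, and your reading of the proof of \cref{main appl 1} as a wall-crossing argument is mistaken; the actual argument is a direct application of the inequality in \cref{main thm 1} after an explicit topological decomposition of $mS\#M$.
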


\begin{cor}
\label{cor homotpy groups2}
Let $X$ be a smooth $4$-manifold which is homeomorphic to a $4$-manifold of the form
\begin{align*}
mS\#M\#_{i=1}^{p}(S^{1} \times Y_{i})\#_{j=1}^{q} (S^{2} \times \Si_{j}),
\end{align*}
where
\begin{itemize}
\item $S$ is an Enriques surface and $M$ is a standard simply-connected smooth 4-manifold.
If $M$ is spin, we assume that $\sigma(M) \leq 0$;
\item $Y_{i}$ is an oriented closed $3$-manifold, and $\Si_{j}$ is an oriented closed $2$-manifold of positive genus; and
\item $m$ is a positive integer, and $p$ and $q$ are non-negative integers.
\end{itemize}
Then the inclusion
\[
\Diff(X) \inc \Homeo(X)
\]
is not a weak homotopy equivalence.
More precisely,
\[
\pi_{k}(\Diff(X)) \to \pi_{k}(\Homeo(X))
\]
is not an isomorphism for some $k \leq b_{+}(M)+m-1$.
\end{cor}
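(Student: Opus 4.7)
The plan is to deduce the corollary from \cref{main appl 2} by the same classifying-space / obstruction-theoretic mechanism that turns \cref{main appl 1} into \cref{cor homotpy groups}. First I would invoke \cref{main appl 2} to produce a $\Homeo(X)$-bundle $X \to E \to T^{n}$ with $n = b_{+}(M)+m$ which admits no reduction of structure group to $\Diff(X)$. The bundle $E$ is classified by a map $f \colon T^{n} \to B\Homeo(X)$, and its non-smoothability is exactly the statement that $f$ does not lift through the map $B\Diff(X) \to B\Homeo(X)$ induced by the inclusion $\Diff(X) \inc \Homeo(X)$.

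Next I would apply classical obstruction theory to the principal fibration $B\Diff(X) \to B\Homeo(X)$, whose homotopy fiber is $\Homeo(X)/\Diff(X)$. The successive obstructions to lifting $f$ lie in the cohomology groups $H^{k+1}(T^{n}; \pi_{k}(\Homeo(X)/\Diff(X)))$, and since $T^{n}$ has cohomological dimension $n$ only the obstructions with $k \leq n-1$ contribute. From the long exact homotopy sequence of the fibration $\Diff(X) \to \Homeo(X) \to \Homeo(X)/\Diff(X)$, the group $\pi_{k}(\Homeo(X)/\Diff(X))$ vanishes throughout $0 \leq k \leq n-1$ as soon as $\pi_{k}(\Diff(X)) \to \pi_{k}(\Homeo(X))$ is an isomorphism for every $k \leq n-1$.

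Combining these two steps, if $\pi_{k}(\Diff(X)) \to \pi_{k}(\Homeo(X))$ were an isomorphism for every $k \leq b_{+}(M)+m-1$, then all relevant obstructions would vanish, $f$ would lift, and $E$ would therefore admit a smooth reduction, contradicting \cref{main appl 2}. Hence some $k \leq b_{+}(M)+m-1$ must witness a failure of $\pi_{k}(\Diff(X)) \to \pi_{k}(\Homeo(X))$ to be an isomorphism, which is precisely the assertion of the corollary. I do not expect any serious obstacle in this argument: the entire analytic and gauge-theoretic content is concentrated in \cref{main appl 2} and in the $\Pin^{-}(2)$-monopole machinery used to establish it, while what remains is a routine piece of obstruction theory on a torus; the only care needed is to track the precise dictionary between the bound on $k$ and the dimension of the base $T^{n}$.
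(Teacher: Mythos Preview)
Your proposal is correct and matches the paper's approach: the paper does not give a separate proof of this corollary, but indicates (before \cref{cor homotpy groups}) that such corollaries follow from the corresponding theorems ``by standard obstruction theory,'' which is exactly the argument you have written out. You have accurately tracked the dimension count that converts the base $T^{n}$ with $n=b_{+}(M)+m$ into the bound $k \leq b_{+}(M)+m-1$.
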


As a more specific corollary of \cref{main appl 2} than \cref{cor homotpy groups2}, we may give new examples of $4$-manifolds $X$ for which $\pi_{0}(\Diff(X)) \to \pi_{0}(\Homeo(X))$ are not surjections:

\begin{cor}
\label{cor homotpy groups3}
Let $X$ be a smooth $4$-manifold which is homeomorphic to a $4$-manifold of the form
\begin{align*}
S\#k(-\CP^{2})\#_{i=1}^{p}(S^{1} \times Y_{i})\#_{j=1}^{q} (S^{2} \times \Si_{j}),
\end{align*}
where
\begin{itemize}
\item $S$ is an Enriques surface, $Y_{i}$ is an oriented closed $3$-manifold, and $\Si_{j}$ is an oriented closed $2$-manifold of positive genus; and
\item $k$, $p$ and $q$ are non-negative integers.
\end{itemize}
Then 
\[
\pi_{0}(\Diff(X)) \to \pi_{0}(\Homeo(X))
\]
is not a surjection.
Namely, there exists a self-homeomorphism of $X$ which is not topologically isotopic to any self-diffeomorphism of $X$.
\end{cor}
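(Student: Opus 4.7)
The plan is to deduce the corollary from \cref{main appl 2} by specializing to $M = k(-\CP^{2})$ and $m = 1$, and then to translate the resulting non-smoothable bundle over $T^{1} = S^{1}$ into non-surjectivity of $\pi_{0}(\Diff(X)) \to \pi_{0}(\Homeo(X))$.

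First I would verify the hypotheses of \cref{main appl 2}. With $M = k(-\CP^{2})$ (understood as $S^{4}$ when $k = 0$) and $m = 1$, the manifold $M$ is a connected sum of copies of $-\CP^{2}$ and is therefore standard in the sense of \cref{main appl 2}. If $k \geq 1$ then $M$ is non-spin, so the signature hypothesis on $M$ is vacuous; if $k = 0$ then $M = S^{4}$ is spin with $\sigma(M) = 0 \leq 0$, which satisfies the hypothesis. The remaining hypotheses on $Y_{i}$, $\Si_{j}$, $p$, and $q$ match those of the present corollary verbatim. Since $b_{+}(k(-\CP^{2})) = 0$, one has $n = b_{+}(M) + m = 1$, and \cref{main appl 2} supplies a non-smoothable $\Homeo(X)$-bundle $X \to E \to T^{1} = S^{1}$.

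The last step is the standard interpretation of bundles over $S^{1}$ in terms of monodromy. A $\Homeo(X)$-bundle over $S^{1}$ is classified up to isomorphism by a conjugacy class in $\pi_{0}(\Homeo(X))$ realized by the isotopy class of the monodromy homeomorphism $\phi \colon X \to X$, and such a bundle admits a reduction of structure group to $\Diff(X)$ if and only if this class lies in the image of $\pi_{0}(\Diff(X)) \to \pi_{0}(\Homeo(X))$ up to conjugation. Hence non-smoothability of $E$ implies that $\phi$ is a self-homeomorphism of $X$ not topologically isotopic to any self-diffeomorphism, which is precisely the conclusion.

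Since the deep input is already packaged in \cref{main appl 2}, I do not anticipate any substantive obstacle: the proof reduces to matching hypotheses (with the only delicate case being $k = 0$, where one must check the signature condition on $S^{4}$) and invoking the elementary classification of bundles over $S^{1}$ by their monodromy.
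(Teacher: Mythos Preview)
Your proposal is correct and follows exactly the route the paper intends: the corollary is presented as a direct specialization of \cref{main appl 2} with $m=1$ and $M = k(-\CP^{2})$, giving $n = b_{+}(M)+m = 1$, and the translation from a non-smoothable bundle over $T^{1}=S^{1}$ to non-surjectivity of $\pi_{0}(\Diff(X)) \to \pi_{0}(\Homeo(X))$ via the monodromy description is the standard obstruction-theoretic step the paper invokes for its corollaries. Your handling of the edge case $k=0$ (where $M=S^{4}$ is spin with $\sigma(M)=0$) is also correct.
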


\begin{rem}
The case in \cref{main appl 2} and \cref{cor homotpy groups2,cor homotpy groups3} that $p=q = 0$ can be deduced also from an argument using \cite[Theorems~1.1]{B}.
\end{rem}

The first example of $4$-manifolds $X$ for which $\pi_{0}(\Diff(X)) \to \pi_{0}(\Homeo(X))$ are not surjections is a $K3$ surface, proven by Donaldson~\cite{D}.
One may check the same statement holds also for any homotopy $K3$ surface using the Seiberg--Witten invariants and a result by Morgan and Szab\'{o}~\cite{MS}.
We note that examples of $4$-manifolds $X$ for which $\pi_{0}(\Diff(X)) \to \pi_{0}(\Homeo(X))$ are not {\it injections} are known a little more: the first example was given by Ruberman~\cite{R}, and later additional examples were given by Baraglia and the first author~\cite{BK0}, and by Kronheimer and Mrowka~\cite{KM} recently.

The paper is organized as follows.
In \cref{section:pin2-monopole} we recall some basics of $\Pin^-(2)$-monopole theory and describe a finite-dimensional approximation of the families $\Pin^-(2)$-monopole map.
In \cref{section: Main theorems} we give constraints on smooth families of $4$-manifold using a finite-dimensional approximation of a families $\Pin^-(2)$-monopole map.
Those constraints are analogues of some constraints by Baraglia~\cite{B} obtained from the families Seiberg-Witten monopole map.
In \cref{section: Proof of main appl 1} we give the proofs of  \cref{main appl 1,main appl 2}:
we shall construct concrete topological families of $4$-manifolds and show the non-smoothability of them using the constraints obtained in \cref{section: Main theorems}.

\begin{acknowledgement}
The first author was partially supported by JSPS KAKENHI Grant Numbers 17H06461 and 19K23412.
The second author was supported by JSPS KAKENHI Grant Number 19K03506.
\end{acknowledgement}

\section{$\Pin^-(2)$-monopole maps for families}
\label{section:pin2-monopole}

  First, we briefly review $\Pin^-(2)$-monopole theory.
 For a thorough treatment, readers are referred to \cites{N1,N2}.
  
  Let $X$ be an oriented, closed, connected, and smooth $4$-manifold.
  Fix a Riemannian metric $g$ on $X$.
  Let $\widetilde{X} \to X$ be an unbranched double cover, and let $\ell = \widetilde{X} \times_{\{\pm 1\}} \Z$, the associated local system with coefficient group $\Z$.
We always assume that $\widetilde{X} \to X$ is nontrivial throughout this paper.
  Let $\ell_{\R} = \ell\otimes\R$ and $i\ell_{\R}=\ell\otimes\sqrt{-1}\R$.
   Set $b_j^{\ell}(X) = \rank H^j(X;\ell)$ for $j \geq 0$, and set $b_{+}^{\ell}(X) = \rank H^{+}(X;\ell)$, where $H^{+}(X;\ell)$ denotes a maximal-dimensional positive-definite subspace of $H^{2}(X;\ell)$ with respect to the intersection form of $X$.
  Define the Lie groups $\Pin^-(2)$, and $\Spincm(4)$ by $\Pin^{-}(2) = \mathrm{U}(1) \cup j\mathrm{U}(1) \subset \mathrm{Sp}(1)$ and $\Spincm(4) = \Spin(4) \times_{\{\pm 1\}} \Pin^-(2)$.
  Note that $\Spincm(4)/\Spin^{c}(4) \cong \{\pm1\}$ and $\Spincm(4)/\Pin^{-}(2) \cong \mathrm{SO}(4)$.
    A {\it $\Spincm$-structure} on $\widetilde{X} \to X$ is defined as a triple $\mathfrak{s} = (P, \sigma, \tau)$, where
    \begin{itemize}
      \item $P$ is a principal $\Spincm(4)$-bundle over $X$,
      \item $\sigma : \widetilde{X} \to P/\Spin^{c}(4)$ is an isomorphism of $\{\pm1\}$-bundles, and
      \item $\tau : {\rm Fr}(X) \to P/\Pin^{-}(2)$ is an isomorphism of $SO(4)$-bundles, where ${\rm Fr}(X)$ denotes the frame bundle of $X$.
    \end{itemize}
  The associated $\mathrm{O}(2)$-bundle $L = P/\Spin(4)$ is called the {\it characteristic bundle} of a $\Spin^{c-}$-structure $\mathfrak{s} = (P,\sigma,\tau)$.
  We denote the $\ell$-coefficient Euler class of $L$ by $\tilde{c}_1(\mathfrak{s}) \in H^2(X; \ell)$.
 
  Some notions associated to $\Spincm$-structures are very similar to those of $\Spin^c$-structures:
  a $\Spin^{c-}$-structure $\mathfrak{s}$ on $\widetilde{X} \to X$ gives rise to the positive and negative spinor bundles $S^{\pm}$ over $X$ and the Clifford multiplication $\rho : \Omega^1(X; i\ell_{\R}) \to \Hom(S^+,S^-)$.
  An $\mathrm{O}(2)$-connection $A$ on $L$ induces the Dirac operator $D_A : \Gamma(S^+) \to \Gamma(S^-)$. 
  Note that the curvature $F_A^+$ is an element of $\Omega^+(X;i\ell_{\R})$.
   We denote by $q \colon S^+ \to \Omega^+(X; i\ell_{\R})$ the canonical real quadratic map.
The {\it $\Pin^-(2)$-monopole equations} is defined by
  \begin{equation}\label{eq:pin2-monopole}
	\left\{
	\begin{aligned}
		D_A \phi &= 0, \\
		\frac{1}{2} F_A^+ &= q(\phi)
	\end{aligned}
	\right.
  \end{equation}
  for $\OO(2)$-connections $A$ on $L$ and positive spinors $\phi \in \Gamma(S^+)$.
  The  equations \eqref{eq:pin2-monopole} are equivariant under the action of the gauge group $\gauge$, which is defined by $\gauge = \Gamma(\widetilde{X} \times_{\{\pm 1\}} \mathrm{U}(1))$. 

 Choose a reference $\OO(2)$-connection $A_0$ on $L$.
 The {\it $\Pin^-(2)$-monopole map}
  \begin{gather*} m\colon\Omega^1(X;i\ell_{\R})\oplus\Gamma(S^+)\to H^1(X;\ell_{\R})\oplus(\Omega^0\oplus\Omega^+)(X;i\ell_{\R})\oplus\Gamma(S^-)
 \end{gather*}
 is defined by 
  \begin{gather*}
 m(a,\phi)=(h(a), d^*a,d^+a-q(\phi),D_{A_0+a}\phi),
  \end{gather*}
 where 
 $h(a)$ denotes the harmonic part of the $1$-form $a$.
 The  map $m$ is decomposed into the sum $m=l+c$, where $l$ is the linear map given by $l=(d^*,d^+, D_{A_0})$, and $c$ is the quadratic part given by $c(a,\phi)=(0,-q(\phi),\frac12\rho(a)\phi)$.
 As well as usual Seiberg--Witten theory, we consider the Sobolev completions of the domain and the target of $m$.
Choose $k\geq 4$. Let $\calV:= L^2_k(\Omega^1(X;i\ell_{\R})\oplus\Gamma(S^+))$ and $\calW:=L^2_{k-1}((\Omega^0\oplus\Omega^+)(X;i\ell_{\R})\oplus\Gamma(S^-))$.
 Then $m$ is extended to a smooth map $m\colon \calV\to H^1(X;\ell_{\R})\oplus\calW$.
 The linear part $l$ is a Fredholm map of index
 \[
 \frac14(\tilde{c}_1(\mathfrak{s})^2-\sigma(X)) + b_1^{\ell}(X) -b_+^{\ell}(X),
 \]
 and $c$ is a non-linear compact map.
Note that $b_0^{\ell}(X)=0$ if $\ell$ is non-trivial.
 
 We take the $L^2_{k+1}$-completion of the gauge group $\gauge$, denoted by the same symbol $\gauge$ to simplify the notation.
 Then the $\gauge$-action is smooth.
 The space 
\[
 \ker (d^*\colon L^2_k(\Omega^{1}(X;i\ell_\R))\to L^2_{k-1}(\Omega^{0}(X;i\ell_\R)))
\]
is a global slice for the $\gauge$-action at $(0,0)$, and we have
 \[
 m^{-1}(0) = \{\text{solutions to \eqref{eq:pin2-monopole}}\}\cap \ker d^*.
 \]
 The slice $\ker d^*$ still has a remaining gauge symmetry. 
 Let $\calH$ be the kernel of the composition of the maps
  \[
   \xymatrix{
 L^2_{k+1}(\gauge) \ar[r]^-{d} & L^2_k(\Omega^{1}(X;i\ell_\R)) \ar[r]^-{d^{*}+d^{+}} & L^2_{k-1}((\Omega^0\oplus\Omega^+)(X;i\ell_{\R})).}
\]
  Then $m$ is $\calH$-equivariant, and we have
 \[
 m^{-1}(0)/\calH = \{\text{solutions to \eqref{eq:pin2-monopole}}\}/\gauge.
 \]
Note that
\[
H^1(X;\ell)=\Z_2\oplus \Z^{b_1^{\ell}}
\]
if $\ell$ is nontrivial.
Let $r\colon H^1(X;\ell)\to H^1(X;\ell_{\R})$ be the map induced from the natural map $\ell\to\ell_{\R}$ and set $\bar{H}:=\im r \cong \Z^{b_1^{\ell}}$.
Note the following exact sequence:
\begin{equation}\label{eq:harm-ex}
1\to\{\pm1\}\to \calH\to \bar{H}\to 0.
\end{equation}
Fixing a splitting of the above sequence, we have
\[
\calH \cong \{\pm1\}\times\bar{H}.
\]
\begin{rem}
A way of fixing a splitting of \eqref{eq:harm-ex} is as follows ({\it cf.} \cite[\S4.7]{N1}).
Choose a loop $\gamma$ in $X$ such that the restriction of $\ell$ to $\gamma$ is nontrivial.
Let $\calK_{\gamma}$ be the subgroup of $\gauge$ consisting of $u\in\gauge$ satisfying that $u|_{\gamma}$ is homotopic to the constant map with value $1$.
Then there is an exact sequence
\[
1\to\calK_{\gamma}\to\gauge\to\{\pm 1\}\to 1.
\]
From this we have 
\[
\calH\cap\calK_{\gamma} \cong \bar{H},
\]
and this gives a splitting of \eqref{eq:harm-ex}.
\end{rem}

Let $J:=H^1(X;\ell_{\R})/\bar{H}$. Then $J$ is a $b_1^{\ell}$-dimensional torus.
Dividing the harmonic projection 
\[
\varpi\colon\calV\to H^1(X;i\ell)\ ;\ (a,\phi)\mapsto h(a)
\]
by $\bar{H}$, we obtain a Hilbert bundle $\bar{\calV}=\calV/\bar{H}\to J$.
Then dividing the map $m$ by $\bar{H}$, we obtain a fiber-preserving $\{\pm 1\}$-equivariant map $\bar{m}$:
\begin{equation}\label{eq:m-bar}
\xymatrix{
\bar{\calV}\ar[r]^-{\bar{m}} \ar[d]^{\varpi}&J\times \calW\ar[d]\\
J\ar@{=}[r]&J.
}
\end{equation}
For our later purpose, there is no need for the whole of $\bar{m}$. 
What we need is  only the restriction $\bar{m}|_{\varpi^{-1}(0)}$ of $\bar{m}$ to the fiber over the origin of $J$.
The restriction  $\bar{m}|_{\varpi^{-1}(0)}$ is identified with the map $m_0$ defined by
\begin{equation}
\begin{gathered}\label{eq:monopole0}
\calV_0:= L^2_k(\im(d^*\colon\Omega^2(X;i\ell_{\R})\to\Omega^1(X;i\ell_{\R}))\oplus\Gamma(S^+)), \\
\calW_0:= L^2_{k-1}(\Omega^+(X;i\ell_{\R})\oplus\Gamma(S^-)),\\
m_0\colon \calV_0\to\calW_0\ ;\ 
(a,\phi)\mapsto( F_{A_0}+ d^+ a -q(a), D_{A_0+a}\phi).
\end{gathered}
\end{equation}

Let $B$ be a compact space.
Suppose  a smooth $\mathrm{Aut}(X,\mathfrak{s})$-bundle $(X,\mathfrak{s})\to E\to B$ is given. 
That is, $E$ is a smooth fiber bundle $E=\coprod_{b\in B} (X_b,\mathfrak{s}_b)$ with fiber a $\Spin^{c-}$ $4$-manifold such that there is an isomorphism $(X_b,\mathfrak{s}_b)\cong (X,\mathfrak{s})$ of $\Spin^{c-}$ $4$-manifolds for each $b$.
Let $\mathbb{L}=\coprod_{b\in B} L_b$ be the associated family of  $\OO(2)$-bundles where each $L_b$ is the characteristic $\OO(2)$-bundle of $(X_b,\mathfrak{s}_b)$.
Choose a family of Riemannian metrics $\{g_b\}_{b\in B}$ on $E$.
Then we have  an associated vector bundle
\[
\R^{b_{+}^{\ell}} \to H^+(E,\ell) \to B
\]
whose fiber over $b\in B$ is the space $H^+(X_b;\ell_b)$ of harmonic self-dual $2$-forms on $X_b$.
The isomorphism class of $H^{+}(E,\ell)$ is independent of the choice of the family of Riemannian metrics on $E$ since the Grassmannian of maximal-dimensional positive-definite subspaces of $H^{2}(X;\ell_{\R})$ is contractible.

Choose a family of reference $\OO(2)$-connections $\{A_b\}_{b\in B}$ on $\mathbb{L}$.
Then we can obtain a family of $m_0$ given in \eqref{eq:monopole0}, denoted by
\[
\mu_0\colon\tilde{\calV}\to\tilde{\calW},
\]
by parametrizing the previous argument over $B$.
Here $\tilde{\calV}$ and $\tilde{\calW}$ are the Hilbert bundles over $B$ with fibers $\calV_0$ and $\calW_0$ respectively,
 and $\mu_0$ is a fiber-preserving map whose restriction on each fiber is identified with the map $m_0$.

By taking a finite-dimensional approximation of $\mu_0$ \cite{F,BF,BK}, we obtain a $\{\pm 1\}$-equivariant proper map 
\[
f\colon V\to W
\]
which satisfies the following properties:
\begin{itemize}
\item $V$, $W$ are finite rank sub-bundles of $\tilde{\calV}$, $\tilde{\calW}$.
\item $V$ and $W$ are decomposed as $V=V_0\oplus V_1$ and $W=W_0\oplus W_1$. 
The group $\{\pm1\}$ acts on $V_0$ and $W_0$ trivially, and on $V_1$ and $W_1$ by fiberwise multiplication.
\item $f^{\{\pm 1\}} = f|_{V_0}\colon V_0\to W_0$ is a fiberwise linear incusion.
\item $W_0$ is isomorphic to $V_0\oplus H^+(E,\ell)$.
\item The index of the family of the Dirac operators, $\mathrm{ind} \{D_{A_b}\}$, is represented by $[V_1] -[W_1]$ in $K_{\{\pm 1\}}(B)$.  
\end{itemize}

When $\tilde{c}_1(\mathfrak{s})=0$, the $\Pin^-(2)$-monopole equations have a larger gauge symmetry given by  $\tilde{\gauge}=\Gamma(\tilde{X}\times_{\{\pm 1\}}\Pin^-(2))$ (\cite[\S4.3]{N1}).
Then the whole theory admits the $j$-action and the resulting finite-dimensional approximation $f\colon V\to W$ is equivariant under the action of the cyclic group $C_4$ of order $4$ generated by $j$.
In this case, $C_4$ acts on $V_0$ and $W_0$ by fiberwise multiplication of $\{\pm 1\}$ via the surjective homomorphism $C_4\to\{\pm1\}$, and on $V_1$ and $W_1$ by fiberwise multiplication of $j$.
Note that the $j$-action gives complex structures on $V_1$ and $W_1$.
\begin{rem}
As mentioned above, what we need for the proofs of our results is the family $\mu_0$ and its finite-dimensional approximation.
More generally, we can construct a parametrized family of the {\it total} monopole maps $\bar{m}$  of \eqref{eq:m-bar} once a family of splittings of \eqref{eq:harm-ex} is given.
We can obtain such a family of splittings if we can choose a family of loops $\{\gamma_b\}_{b\in B}$ such that $\ell|_{\gamma_b}$ is nontrivial.
In this case, the family of the monopole maps is parametrized by the total space of a bundle $K$ over $B$ with fiber $J$.  
\end{rem}

\section{Constraints from $\Pin^{-}(2)$-monopole}
\label{section: Main theorems}

As in \cref{section:pin2-monopole}, suppose that we have a smooth $\mathrm{Aut}(X,\mathfrak{s})$-bundle $(X,\mathfrak{s})\to E\to B$, where $B$ is a compact space.

The following theorem is a $\Pin^{-}(2)$-monopole analogue of a part of \cite[Theorem~1.1]{B} by Baraglia:

\begin{theo}
\label{main thm 1}
If $w_{b_+^{\ell}}(H^+(E,\ell))\neq 0$ in $H^{b_+^{\ell}}(B;\Z_2)$, then $\tilde{c}_1(\fraks)^2\leq \sigma(X)$ holds.	
\end{theo}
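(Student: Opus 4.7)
The plan is to transport Baraglia's $C_{2}$-equivariant Thom-class chase to the $\Pin^{-}(2)$ setting, working only with the $\{\pm 1\}$-symmetry of the finite-dimensional approximation $f\colon V\to W$ from \cref{section:pin2-monopole} and computing everything in the mod-$2$ Borel cohomology
\[
H^{*}_{C_{2}}(B;\Z_{2}) \;\cong\; H^{*}(B;\Z_{2})[u], \qquad u \in H^{1}(BC_{2};\Z_{2}).
\]
Suppose for contradiction that $w_{b_{+}^{\ell}}(H^{+}(E,\ell)) \neq 0$ but $\tilde{c}_{1}(\fraks)^{2} > \sigma(X)$. Combining the identification $[V_{1}]-[W_{1}] = \mathrm{ind}\{D_{A_{b}}\}$ in $K_{C_{2}}(B)$ with the rank count $\rank W_{0}-\rank V_{0}=b_{+}^{\ell}$ forces
\[
d \;:=\; \rank V_{1} - \rank W_{1} \;=\; \tfrac{1}{4}\bigl(\tilde{c}_{1}(\fraks)^{2}-\sigma(X)\bigr)\;\geq\; 1.
\]

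Next, I would apply the Pontryagin--Thom construction. Since $f$ is proper and $C_{2}$-equivariant, passing to the Borel-constructed Thom spaces gives a based map $T(V)_{hC_{2}}\to T(W)_{hC_{2}}$ over $B\times BC_{2}$. By the mod-$2$ Thom isomorphism, its cohomology pullback is multiplication by a unique class $\alpha\in H^{b_{+}^{\ell}-d}_{C_{2}}(B;\Z_{2})$ characterised by $f_{+}^{*}(U_{W_{B}}) = \alpha\cdot U_{V_{B}}$. Restricting along the $C_{2}$-fixed inclusion $V_{0}\times BC_{2}\hookrightarrow V_{B}$ (and likewise for $W$) identifies
\[
U_{V_{B}}|_{\mathrm{fix}} = U_{V_{0}}\cdot e^{C_{2}}(V_{1}), \qquad U_{W_{B}}|_{\mathrm{fix}} = U_{W_{0}}\cdot e^{C_{2}}(W_{1}),
\]
with the Borel-equivariant Euler class of the sign-twisted rank-$r$ bundle $V_{1}$ being the top Stiefel--Whitney class of $V_{1}\otimes\gamma$, namely $e^{C_{2}}(V_{1})=\sum_{i=0}^{r} w_{i}(V_{1})\,u^{r-i}$ for $\gamma\to BC_{2}$ tautological. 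Because $f^{C_{2}} = f|_{V_{0}}$ is a linear inclusion with cokernel $H^{+}(E,\ell)$, the Thom-class pullback satisfies $(f^{C_{2}})^{*}(U_{W_{0}}) = U_{V_{0}}\cdot w_{b_{+}^{\ell}}(H^{+}(E,\ell))$. Chasing these through the commutative restriction square yields the key identity
\[
\alpha\cdot e^{C_{2}}(V_{1}) \;=\; w_{b_{+}^{\ell}}(H^{+}(E,\ell))\cdot e^{C_{2}}(W_{1}) \quad\text{in}\ H^{b_{+}^{\ell}+\rank W_{1}}_{C_{2}}(B;\Z_{2}).
\]

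Finally, I would conclude by comparing both sides along the splitting $H^{n}_{C_{2}}(B;\Z_{2}) = \bigoplus_{j} H^{j}(B;\Z_{2})\cdot u^{n-j}$. The left-hand side has vanishing $H^{j}(B)$-component for every $j<b_{+}^{\ell}$. Writing $\alpha=\sum_{k=0}^{b_{+}^{\ell}-d}\alpha_{k}\,u^{b_{+}^{\ell}-d-k}$ with $\alpha_{k}\in H^{k}(B;\Z_{2})$ and using $w_{0}(V_{1})=1$, the equation at $H^{j}$-component reads $\alpha_{j}+\sum_{k<j}\alpha_{k}\,w_{j-k}(V_{1})=0$; by induction on $j=0,1,\dots,b_{+}^{\ell}-d$ this forces $\alpha_{0}=\alpha_{1}=\dots=\alpha_{b_{+}^{\ell}-d}=0$, where the hypothesis $d\geq 1$ guarantees $b_{+}^{\ell}-d<b_{+}^{\ell}$ so each such $j$ indeed sits below $b_{+}^{\ell}$. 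Hence $\alpha=0$, the identity collapses to $w_{b_{+}^{\ell}}(H^{+}(E,\ell))\cdot e^{C_{2}}(W_{1})=0$, and reading off the coefficient of $u^{\rank W_{1}}$ yields $w_{b_{+}^{\ell}}(H^{+}(E,\ell))=0$, contradicting the standing hypothesis. I expect the main obstacle to be the careful fixed-point bookkeeping of Thom classes in the parametrised Borel framework---in particular justifying the identification $e^{C_{2}}(V_{1})=\sum_{i} w_{i}(V_{1})\,u^{r-i}$ via $V_{1}\otimes\gamma$ and confirming that the equivariant Pontryagin--Thom pullback is exactly multiplication by the single class $\alpha$---after which the triangular chase above is routine.
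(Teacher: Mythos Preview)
Your argument is correct and follows essentially the same route as the paper: both set up the $\{\pm 1\}$-equivariant Thom-class square in mod-$2$ Borel cohomology, extract the identity $\alpha\cdot e_{G}(V_{1}) = w_{b_{+}^{\ell}}(H^{+}(E,\ell))\cdot e_{G}(W_{1})$, and then use that $e_{G}(V_{1})$ is monic in $u$ to force $\rank V_{1}\le \rank W_{1}$. The only cosmetic differences are that you phrase the last step as a contradiction via an explicit triangular induction on the $H^{j}(B)$-components (the paper just says ``divisible by $e_{G}(V_{1})$ implies $m\le n$''), and that your sentence ``the left-hand side has vanishing $H^{j}(B)$-component for every $j<b_{+}^{\ell}$'' should read ``right-hand side'' (whence the left does too, by the identity).
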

\begin{proof}
The proof is parallel to that of \cite[Theorem~1.1]{B}.
Throughout this proof, the coefficients of cohomology groups are supposed to be $\Z_2$.
Let $G=\{\pm 1\}$.
Note that the Borel cohomology $H_G^*(pt)$ is isomorphic to $ \Z_2[u]$ with $\deg u =1$. 
Since $G$ acts on the base space $B$ trivially, we have $H_G^*(B) \cong H^*(B)[u]$.
For a vector bundle $U$ over $B$, denote its disk bundle by $D(U)$, and the sphere bundle by $S(U)$.
Choosing a finite-dimensional approximation $f$ of $\mu_0$, we have the following commutative diagram,
\[
\xymatrix{
V=V_0\oplus V_1\ar[r]^{f} &W=W_0\oplus W_1\\
V_0\ar[r]^{f^G}\ar[u]^{\iota_0}&W_0\ar[u]^{\iota_1}.
}
\]
Note that the vertical arrows and $f^G$ are fiberwise linear inclusions.
We also have a relative version of the above diagram for the pairs $(D(V),S(V))$ etc.
Applying $H^*_G$-functor, we obtain
\begin{equation}\label{eq:comm}
\xymatrix{
H^*_G(D(V),S(V)) \ar[d]_{\iota_0^*}&H^*_G(D(W),S(W))\ar[l]_{f^*}\ar[d]_{\iota_1^*}\\
H^*_G(D(V_0),S(V_0))&H^*_G(D(W_0),S(W_0))\ar[l]_{(f^G)^*}.
}
\end{equation}
Note the following facts:
\begin{itemize}
\item The Thom isomorphisms, e.g., $H^*_G(D(V),S(V))\cong H^*_G(B)\tau_G(V)$, where $\tau_G(V)$ is the $G$-equivariant Thom class. 
\item $\iota_0^*\tau_G(V_0\oplus V_1)=e_G(V_1)\tau_G(V_0)$, where $e_G(V_1)$ is the $G$-equivariant Euler class.
Similarly, 
\begin{gather*}
\iota_1^*\tau_G(W_0\oplus W_1) = e_G(W_1)\tau_G(W_0),\\
(f^G)^*\tau_G(W_0) = e_G(H^+(E,\ell))\tau_G(V_0).
\end{gather*}
The last equation follows from that $W_0\cong V_0\oplus H^+(E,\ell)$ 
\item There exists a class $\alpha$ in $H_G^*(B)$ such that $f^*\tau_G(W)=\alpha\tau_G(V)$. 
The class $\alpha$ is called the  {\it cohomological degree} of $f$.
\end{itemize}
By the diagram \eqref{eq:comm}, we obtain the relation
\begin{equation}\label{eq:relation}
\alpha e_G(V_1)\tau_G(V_0) = e_G(H^+(E,\ell))e_G(W_1)\tau_G(V_0).
\end{equation}
Let $m=\rank_{\R} V_1$ and $n=\rank_{\R} W_1$.
Then 
\[
m-n = \operatorname{ind} D_{A_b} = \frac14(\tilde{c}_1(\fraks)^2 -\sigma(X)).
\]
The $G$-Euler classes of $V_1$ and $W_1$ are given by
\begin{align*}
e_G(V_1) =&w_m(V_1) + w_{m-1}(V_1)u+ \cdots+w_1(V_1)u^{m-1} + u^m,\\
e_G(W_1) =&w_n(W_1) + w_{n-1}(W_1)u+ \cdots+w_1(W_1)u^{n-1} + u^n.
\end{align*}
Since $G$ acts on $H^+(E,\ell)$ trivially, we have $e_G(H^+(E,\ell)) = w_{b_+^{\ell}}(H^+(E,\ell))$.
By \eqref{eq:relation}, $e_G(H^+(E^+,\ell))e_G(W_1)$ is divisible by $e_G(V_1)$.
If $e_G(H^+(E,\ell)) = w_{b_+^{\ell}}(H^+(E,\ell))\neq 0$, then $m-n\leq 0$.
Finally we obtain $\tilde{c}_1(\fraks)^2\leq \sigma(X)$.
\end{proof}
Using the relation \eqref{eq:relation}, we can obtain additional constraints on $V_1$ and $W_1$.
\begin{cor}\label{cor1}
For $i$ with $i> n-m$ , $w_i([W_1]-[V_1]) e(H^+(E,\ell))=0$.
\end{cor}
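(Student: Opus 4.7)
The plan is to extract the corollary by formally dividing the identity
\[
\alpha \cdot e_G(V_1) = e(H^+(E,\ell)) \cdot e_G(W_1), \qquad \alpha \in H^*_G(B) = H^*(B)[u],
\]
coming from \eqref{eq:relation} in the proof of \cref{main thm 1} (after cancelling $\tau_G(V_0)$ via the Thom isomorphism). The key observation is that the equivariant Euler classes here are, up to a power of $u$, the reversed total Stiefel--Whitney polynomials of $V_1$ and $W_1$, so formally dividing them produces the total Stiefel--Whitney class of the virtual bundle $[W_1]-[V_1]$.

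Concretely, substitute $t = u^{-1}$ and write $w(V_1)(t) = \sum_{k \geq 0} w_k(V_1) t^k$, $w(W_1)(t) = \sum_{k \geq 0} w_k(W_1) t^k$, and $e = e(H^+(E,\ell))$. The identity becomes $\alpha \cdot t^{-m} w(V_1)(t) = e \cdot t^{-n} w(W_1)(t)$ in $H^*(B)[t, t^{-1}]$. Passing to the formal Laurent series ring $H^*(B)[[t]][t^{-1}]$, the element $w(V_1)(t)$ is invertible since it has constant term $1$, and by definition of the total Stiefel--Whitney class of a virtual bundle one has $w(W_1)(t)/w(V_1)(t) = \sum_{i \geq 0} w_i([W_1] - [V_1]) t^i$. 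Therefore
\[
\alpha = e \cdot t^{m-n} \sum_{i \geq 0} w_i([W_1]-[V_1]) t^i = \sum_{i \geq 0} e \cdot w_i([W_1]-[V_1]) \cdot t^{m-n+i}.
\]

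Now $\alpha$ a priori belongs to $H^*(B)[u]$, whose image in $H^*(B)[[t]][t^{-1}]$ contains no strictly positive power of $t$. Hence every coefficient of $t^{m-n+i}$ with $m-n+i>0$ must vanish, which is exactly $e(H^+(E,\ell)) \cdot w_i([W_1]-[V_1]) = 0$ for $i > n-m$. The only point needing verification is that $\alpha$ in the original identity is uniquely determined, so that the formal division really computes it; this is immediate from $e_G(V_1)$ being monic in $u$, hence a non-zero-divisor in $H^*(B)[u]$. I do not anticipate any real obstacle: the corollary is essentially a formal consequence of the divisibility relation, using only the elementary fact that $H^*(B)[u]$ sits inside $H^*(B)[[t]][t^{-1}]$ as the subring of Laurent series with no positive power of $t$.
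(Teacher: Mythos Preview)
Your proof is correct and follows essentially the same route as the paper's: both invert $e_G(V_1)$ in the key relation and read off the vanishing from the fact that $\alpha$ has no negative powers of $u$. The only cosmetic difference is that you pass to $H^*(B)[[t]][t^{-1}]$ with $t=u^{-1}$, whereas the paper inverts directly in $H^*(B)[u,u^{-1}]$ (which is legitimate because $B$ is compact, so the positive-degree Stiefel--Whitney classes are nilpotent and the geometric series for $w(V_1)(t)^{-1}$ terminates); your formulation avoids this appeal to finite-dimensionality but is otherwise the same argument spelled out in more detail.
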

\begin{proof}
In $H^*(B)[u,u^{-1}]$, the equality \eqref{eq:relation} implies that
\[
\alpha = e_G(H^+(E^+,\ell))e_G(W_1)e_G(V_1)^{-1}.
 \]
 Since $\alpha$ is in $H^*(B)[u]$, the right-hand side has no terms of negative degree in $u$.
 \end{proof}

\begin{rem}
In the proofs of \cref{main thm 1} and \cref{cor1}, we used the $\Z_2$-coefficient Borel cohomology.
We can obtain similar constraints using the Borel cohomology with local coefficient $\Z_{w_1(H^+(E;\ell))}$.
In this case, the constraints are given in terms of  Chern classes of $V_1$ and $W_1$ with local coefficient.
\end{rem}

The following theorem is a $\Pin^{-}(2)$-monopole analogue of \cite[Theorem~1.2]{B}:

\begin{theo}
\label{main thm 2}
Suppose $\tilde{c}_1(\mathfrak{s})=0$ for the family $(X,\mathfrak{s})\to E\to B$.
If $w_{b_+^{\ell}}(H^+(E,\ell)) \neq 0$ or $w_{b_+^{\ell}-1}(H^+(E,\ell)) \neq 0$, then we have $\sigma(X) \geq 0$.
\end{theo}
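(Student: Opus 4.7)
The plan is to upgrade the argument of \cref{main thm 1} by exploiting the $C_4$-equivariance of the finite-dimensional approximation $f\colon V\to W$ afforded by the hypothesis $\tilde{c}_1(\mathfrak{s})=0$, where $C_4=\langle j\rangle\subset\Pin^{-}(2)$. I will work in $C_4$-equivariant Borel cohomology with $\Z_2$-coefficients, using the ring isomorphism
\[
H^*(BC_4;\Z_2)\cong \Z_2[s,t]/(s^2),\qquad |s|=1,\ |t|=2,
\]
in which $s$ is the mod $2$ Euler class of the real sign representation (via $C_4\twoheadrightarrow\{\pm 1\}$) and $t$ is the mod $2$ reduction of the equivariant first Chern class of the complex representation sending $j$ to $i$. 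Since $\{\pm 1\}\subset C_4$ is central, the $\{\pm 1\}$-fixed subspaces $V_0\subset V$ and $W_0\subset W$ are $C_4$-stable, so the diagram~\eqref{eq:comm} is $C_4$-equivariant and the same chase as in \cref{main thm 1} produces a class $\alpha\in H^*_{C_4}(B;\Z_2)$ with
\[
\alpha\cdot e_{C_4}(V_1)=e_{C_4}(H^+(E,\ell))\cdot e_{C_4}(W_1).
\]

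The heart of the proof is the computation of the three equivariant Euler classes. The splitting principle applied to the complex $C_4$-bundle $V_1\cong V_1^{\mathrm{triv}}\otimes_\C L_t$ shows that $e_{C_4}(V_1)$ is a monic polynomial in $t$ alone of degree $m/2$, with coefficients the even Stiefel--Whitney classes of $V_1$, and likewise $e_{C_4}(W_1)$ is monic in $t$ of degree $n/2$. For $H^+(E,\ell)$, on which $C_4$ acts through the quotient $C_4\twoheadrightarrow\{\pm 1\}$ as $\pm 1$, one has $H^+(E,\ell)\cong H^+_{\mathrm{triv}}\otimes_\R\mathrm{sign}$ as $C_4$-equivariant bundles, so the splitting principle gives
\[
e_{C_4}(H^+(E,\ell))=\sum_{j=0}^{b_+^\ell}w_{j}(H^+(E,\ell))\,s^{b_+^\ell-j}.
\]
The relation $s^2=0$ now collapses this sum to exactly its top two terms, namely $w_{b_+^\ell}(H^+(E,\ell))+w_{b_+^\ell-1}(H^+(E,\ell))\cdot s$; this is the key observation, as precisely the two Stiefel--Whitney classes appearing in the hypothesis enter the equivariant Euler class.

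Finally, writing $\alpha=P(t)+s\,Q(t)$ with $P,Q\in H^*(B;\Z_2)[t]$ and matching the $s^0$ and $s^1$ components of the Euler-class identity gives two polynomial identities in $H^*(B;\Z_2)[t]$:
\[
P(t)\cdot \bar V(t)=w_{b_+^\ell}(H^+)\cdot\bar W(t),\qquad Q(t)\cdot \bar V(t)=w_{b_+^\ell-1}(H^+)\cdot\bar W(t),
\]
where $\bar V=e_{C_4}(V_1)$ and $\bar W=e_{C_4}(W_1)$. Under either hypothesis of the theorem, the relevant right side is a nonzero polynomial of degree $n/2$ (as $\bar W$ is monic), so the leading-coefficient argument from \cref{main thm 1} forces the matching polynomial $P$ or $Q$ to have $t$-degree $(n-m)/2\ge 0$; this yields $n\ge m$, which combined with the Dirac index identity $m-n=-\sigma(X)/4$ gives $\sigma(X)\ge 0$. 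The main subtlety I expect is to verify the $C_4$-equivariant structure on $H^+(E,\ell)$ and to see that the relation $s^2=0$ in $H^*(BC_4;\Z_2)$ isolates exactly the top two Stiefel--Whitney classes of $H^+(E,\ell)$ in the equivariant Euler class; it is this observation that closes the gap between the hypothesis here and the stronger hypothesis required in \cref{main thm 1}.
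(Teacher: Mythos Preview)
Your proposal is correct and follows essentially the same route as the paper's proof: both pass to $C_4$-equivariant Borel cohomology with $\Z_2$-coefficients, use the ring $\Z_2[s,t]/(s^2)$ (the paper writes $\Z_2[u,v]/u^2$), compute the equivariant Euler classes of $V_1,W_1$ as monic polynomials in the degree-$2$ generator and observe that the relation $s^2=0$ collapses $e_{C_4}(H^+(E,\ell))$ to $w_{b_+^\ell}+w_{b_+^\ell-1}\,s$, and then conclude via the degree comparison in relation~\eqref{eq:relation}. Your extra step of splitting $\alpha=P(t)+sQ(t)$ and matching components is a harmless refinement of the paper's one-line degree argument.
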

\begin{proof}
Recall that a finite-dimensional approximation $f$ is $C_4$-equivariant , when $\tilde{c}_1(\mathfrak{s})=0$.
Let $G=C_4$. 
Also in this proof, the coefficients of cohomology groups are supposed to be $\Z_2$.
Then we have $H_G^*(pt)=\Z_2[u,v]/u^2$ with $\deg u=1$ and  $\deg v=2$.
The surjective homomorphism $G\to\{\pm 1\}$ induces the homomorphism 
\[
H^*_{\{\pm 1\}}(pt)=\Z_2[u]\to H^*_G(pt)=\Z_2[u,v]/u^2, \quad u\mapsto u.
\]
Regard $G$ as a subgroup of $S^1$ in an obvious way.
Then the inclusion  $G\hookrightarrow S^1$ induces the homomorphism
\[
H^*_{S^1}(pt)=\Z_2[v]\to H^*_G(pt)=\Z_2[u,v]/u^2, \quad v\mapsto v.
\] 
By an argument similar to the proof of \cref{main thm 1}, we obtain the relation \eqref{eq:relation} for some $\alpha\in H^*_G(B)$.
In this case, $V_1$ and $W_1$ are complex vector bundles.
Let $r:=\rank_{\C} V_1$  and $s:=\rank_{\C} W_1$. 
Then
\[
r-s = -\frac{\sigma(X)}8.
\]
The $G$-Euler classes are written as
\begin{align*}
e_G(V_1) =&c_r(V_1) + c_{r-1}(V_1)u+ \cdots+w_1(V_1)u^{r-1} + u^r,\\
e_G(W_1) =&c_s(W_1) + c_{s-1}(W_1)u+ \cdots+c_1(W_1)u^{s-1} + u^s,
\end{align*}
where $c_i$ are the mod-2-Chern classes.
If we regard $H=H^+(E,\ell)$ as a $\{\pm1\}$-equivariant bundle, then the $\{\pm 1\}$-Euler class of $H$ is given by
\[
e_{\{\pm 1\}} (H) = w_b(H)+w_{b-1}u+\cdots +w_1(H)u^{b-1} +u^b,
\]
where $b=b_+^{\ell}$.
Noticing $u^2=0$ in $H^*_G(B)$, we obtain
\[
e_G(H)=w_b(H)+w_{b-1}(H)u
\]
Then, under the assumption that $e_G(H)\neq 0$, the relation \eqref{eq:relation} implies that 
\[
-\frac{\sigma(X)}8=r-s\leq 0.
\]
This proves the \lcnamecref{main thm 2}.
\end{proof}

\begin{rem}
The proofs of \cite[Theorem~1.1]{B} and \cite[Theorem~1.2]{B} used $S^{1}$-symmetry and $\Pin(2)$-symmetry of the monopole maps respectively.
It would be worth noting that the above arguments of the proofs of \cref{main thm 1,main thm 2} show that $\{\pm1\}$-symmetry and $C_{4}$-symmetry are enough to prove parts of \cite[Theorem~1.1]{B} and \cite[Theorem~1.2]{B} respectively.
\end{rem}

\section{Proof of \cref{main appl 1,main appl 2}}
\label{section: Proof of main appl 1}

In this \lcnamecref{section: Proof of main appl 1} we give the proofs of \cref{main appl 1,main appl 2}.
For this purpose, we first collect some preliminary results.
Let $X$ be an oriented connected closed smooth $4$-manifold with a double cover $\tilde{X}\to X$.
The following lemma is given in \cite{N1}.
(See \cite[Proposition~11]{N1} and the proof of \cite[Theorem~37]{N1}.)

\begin{lem}[\cite{N1}]
\label{lem: suff cond spinc-}
For each cohomology class $C \in H^{2}(X ;\ell)$,
let $[C]_{2} \in H^{2}(X;\Z_{2})$ denote the mod 2 reduction of $C$.
If $[C]_{2}$ satisfies
\[
[C]_{2} = w_{2}(X) + w_{1}(\ell_{\R})^{2},
\]
then there exists a $\Spincm$-structure $\mathfrak{s}$ on $\tilde{X}\to X$ such that $\tilde{c}_1(\fraks)=C$.
\end{lem}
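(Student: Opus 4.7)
My plan is to deduce the lemma from obstruction theory applied to the central extension
\[
1 \to \{\pm 1\} \to \Spincm(4) \to \mathrm{SO}(4) \times \OO(2) \to 1,
\]
where $\OO(2) \cong \Pin^{-}(2)/\{\pm 1\}$. A $\Spincm$-structure on $\widetilde{X}\to X$ is the same data as a lift of the pair $(\mathrm{Fr}(X),L)$ from an $\mathrm{SO}(4)\times\OO(2)$-bundle to a $\Spincm(4)$-bundle, where $L$ is an auxiliary $\OO(2)$-bundle whose orientation double cover is $\widetilde{X}\to X$. Under this equivalence $L$ is exactly the characteristic bundle of the resulting $\Spincm$-structure, so its twisted Euler class is $\tilde{c}_{1}(\mathfrak{s})$.

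The first step is to produce the candidate characteristic bundle. Isomorphism classes of $\OO(2)$-bundles on $X$ with orientation cover isomorphic to $\widetilde{X}\to X$ are classified by the twisted Euler class in $H^{2}(X;\ell)$: from the fibration $B\mathrm{SO}(2)\to B\OO(2)\to B\{\pm 1\}$, once the map to $B\{\pm 1\}$ is fixed to be the classifier of $\ell$, the remaining data is classified by $H^{2}(X;\ell)$. For the given $C$ I thus choose $L$ with $w_{1}(L)=w_{1}(\ell_{\R})$ and $e(L)=C$, and recall the standard identity $[e(L)]_{2}=w_{2}(L)$ for the mod-$2$ reduction of the twisted Euler class of a rank-$2$ real bundle.

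The second step is the obstruction computation. Writing $\Spincm(4)=\bigl(\Spin(4)\times\Pin^{-}(2)\bigr)/\{\pm 1\}_{\mathrm{diag}}$, the obstruction to lifting $(\mathrm{Fr}(X),L)$ all the way to $\Spin(4)\times\Pin^{-}(2)$ is the pair $\bigl(w_{2}(X),\,w_{2}(L)+w_{1}(L)^{2}\bigr)$, the second entry being the classical $\Pin^{-}(2)$-obstruction for $\OO(2)$-bundles. Quotienting by the diagonal $\{\pm 1\}$ replaces this pair by the single $\Z_{2}$-class
\[
w_{2}(X)+w_{2}(L)+w_{1}(L)^{2}\in H^{2}(X;\Z_{2}).
\]
Substituting $w_{2}(L)=[C]_{2}$ and $w_{1}(L)^{2}=w_{1}(\ell_{\R})^{2}$, this reads $w_{2}(X)+[C]_{2}+w_{1}(\ell_{\R})^{2}$, which vanishes by hypothesis. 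A $\Spincm$-structure $\mathfrak{s}$ therefore exists, and by construction $\tilde{c}_{1}(\mathfrak{s})=e(L)=C$.

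The main obstacle is justifying the displayed formula, namely that passing to the diagonal $\{\pm 1\}$ quotient genuinely replaces the two individual obstructions by their sum rather than requiring both to vanish separately. I would verify this by a direct \v{C}ech-cocycle computation: over a good cover the $\Spin$- and $\Pin^{-}$-obstructions are represented by $\Z_{2}$-valued $2$-cocycles measuring the failure of lifting the $\mathrm{SO}(4)$- and $\OO(2)$-transition functions, and in the diagonal quotient a single global sign at each triple overlap is available to correct both simultaneously, which is exactly the condition that the sum of the two cocycles vanishes in $H^{2}(X;\Z_{2})$.
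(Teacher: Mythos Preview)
The paper does not actually prove this lemma; it only cites \cite{N1}, pointing to Proposition~11 and the proof of Theorem~37 there. So there is no in-paper argument to compare against directly.

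Your proof is correct and is the standard obstruction-theoretic argument one would expect in \cite{N1}. The key steps---identifying a $\Spincm$-structure with a lift of $(\mathrm{Fr}(X),L)$ along the central extension $1\to\{\pm1\}\to\Spincm(4)\to \mathrm{SO}(4)\times\OO(2)\to 1$, realizing any $C\in H^{2}(X;\ell)$ as the twisted Euler class of an $\OO(2)$-bundle $L$ with $w_{1}(L)=w_{1}(\ell_{\R})$, and computing the lifting obstruction as $w_{2}(X)+w_{2}(L)+w_{1}(L)^{2}$---are all sound. Your justification of the obstruction formula via \v{C}ech cocycles is exactly the right mechanism: the diagonal $\{\pm1\}$ gives one sign per triple overlap, which corrects the product of the $\Spin$- and $\Pin^{-}$-cocycles, so only their sum needs to be a coboundary. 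One small point worth stating explicitly is that the characteristic bundle of the resulting $\Spincm$-structure is $P/\Spin(4)\cong L$ because $\Spincm(4)/\Spin(4)\cong\Pin^{-}(2)/\{\pm1\}\cong\OO(2)$, which closes the loop between your chosen $L$ and the paper's definition of $\tilde{c}_{1}(\mathfrak{s})$.
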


Note that, as well as usual $\rm{Spin}^{c}$ structure, we may define the notion of a {\it topological $\Spincm$-structure} on a topological manifold and a {\it families topological $\Spincm$-structure} on a continuous bundle of manifolds, namely a manifold bundle whose structure group is the homeomorphism group of the fiber.
(See \cite[Subsection~4.2]{BK} for (families) topological $\rm{Spin}^{c}$ structures.)
Given a continuous bundle of manifolds and a families topological $\Spincm$-structure on it,
if the manifold bundle is smoothable, then the families topological $\Spincm$-structure induces a families $\Spincm$-structure in the usual sense.

\begin{lem}
\label{lem: reduction to Aut connected sum}
For $i=1,\ldots, n$, let $X_{i}$ be an oriented closed $4$-manifold, $\tilde{X}_{i} \to X_{i}$ be a double cover, $\fraks_{i}$ be a $\Spincm$-structure on $\tilde{X}_{i} \to X_{i}$, 
$f_{i}$ be a self-diffeomorphism of $X_{i}$ preserving orientation of $X_{i}$ and the isomorphism class of $\fraks_{i}$.
Suppose that each $f_{i}$ has a fixed ball $B_{i}$ embedded in $X_{i}$, and extend $f_{i}$ to a self-diffeomorphism of $X$ by identity outside $X_{i}$.
Define the connected sums $X=X_{1} \# \cdots \#X_{n}$ and $\fraks = \fraks_{1} \# \cdots \# \fraks_{n}$ gluing around $B_{i}$.
Then there exist commuting lifts $\tilde{f}_{1}, \ldots, \tilde{f}_{n}$ in $\Aut(X,\fraks)$ of the commuting diffeomorphisms $f_{1}, \ldots, f_{n}$.

Moreover, a similar statement holds also for topological $\Spincm$-structures.
\end{lem}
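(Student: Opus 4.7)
The plan is to obtain each desired lift by a two-step construction: first pick any lift $\tilde{f}_i^{(0)} \in \Aut(X_i, \fraks_i)$ of $f_i$, which exists because $f_i$ preserves the isomorphism class of $\fraks_i$; then modify it by a global gauge transformation of $\fraks_i$ supported in $B_i$ so that the resulting lift $\tilde{f}_i$ becomes the identity on the principal $\Spincm(4)$-bundle over a slightly smaller concentric ball $B_i' \subset B_i$ in which the connected sum is to be performed. Once these ``locally trivial'' lifts are in hand, excising small balls inside each $B_i'$ to form the connected sum and extending each $\tilde{f}_i$ by the identity over $X \setminus X_i$ produces commuting elements of $\Aut(X,\fraks)$, since for $i \neq j$ the loci where $\tilde{f}_i$ and $\tilde{f}_j$ act nontrivially lie in the disjoint subsets of $X$ coming from $X_i \setminus B_i'$ and $X_j \setminus B_j'$.

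For the normalization step I would argue as follows. Since $f_i|_{B_i} = \mathrm{id}_{B_i}$ and $B_i$ is simply connected, the restriction of the double cover $\tilde{X}_i \to X_i$ to $B_i$ is trivial, and the induced action of $\tilde{f}_i^{(0)}$ on $\tilde{X}_i|_{B_i}$ is either the identity or the sheet swap; in the latter case I replace $\tilde{f}_i^{(0)}$ by $\eta \cdot \tilde{f}_i^{(0)}$, where $\eta \in \Aut(X_i, \fraks_i)$ is the global symmetry induced by the central element $-1 \in \Pin^{-}(2)$, equivalently the deck transformation of $\tilde{X}_i$, so that the sheet-swap case does not occur. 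The action of $\tilde{f}_i^{(0)}$ on the principal $\Spincm(4)$-bundle over $B_i$ is then fiberwise multiplication by a smooth map $u_i \colon B_i \to U(1) \subset \Pin^{-}(2)$. Contractibility of $B_i$ gives $u_i = e^{2\pi \sqrt{-1}\,\theta_i}$ for some smooth $\theta_i \colon B_i \to \R$; choosing a smooth bump function $\chi_i$ on $X_i$ that equals $1$ on $B_i'$ and is supported in $\mathrm{int}(B_i)$, the formula $\tilde{u}_i := e^{2\pi \sqrt{-1}\,\chi_i \theta_i}$, extended by $1$ outside $B_i$, defines a global element of $\gauge(X_i,\fraks_i)$, and $\tilde{f}_i := \tilde{u}_i^{-1} \cdot \tilde{f}_i^{(0)}$ is a lift of $f_i$ that acts as the identity on the $\Spincm(4)$-bundle over a neighborhood of $B_i'$.

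The connected-sum step is then a direct verification: glue the $\fraks_i$ across the necks in the standard way to obtain $\fraks$, and extend each $\tilde{f}_i$ by the identity outside $X_i$. Because $\tilde{f}_i$ acts trivially in a neighborhood of the neck, this extension yields a well-defined element of $\Aut(X,\fraks)$ lifting the extended $f_i$, and commutativity $\tilde{f}_i \tilde{f}_j = \tilde{f}_j \tilde{f}_i$ is automatic since the nontrivial loci lie in disjoint subsets of $X$. The topological variant goes through identically, with smooth objects replaced by continuous ones, using that a (families) topological $\Spincm$-structure still carries a gauge group as a section space of the associated $U(1)$-bundle and that the contractibility argument on $B_i$ is purely topological. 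The step I expect to be the main obstacle is the deck-transformation adjustment together with the identification of $\tilde{f}_i^{(0)}|_{B_i}$ as a $U(1)$-valued gauge transformation on the nose, which requires careful bookkeeping of the structural maps $\sigma$ and $\tau$ entering the definition of $\fraks_i$ and of the trivializations over $B_i$; once this local reduction is in place, the bump-function extension and the commutativity check become routine.
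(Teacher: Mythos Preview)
Your proof is correct and follows essentially the same approach as the paper: recognise via the exact sequence $1 \to \gauge \to \Aut \to \Diff(\,\cdot\,,[\fraks]) \to 1$ that any lift of $f_i$ restricts to a gauge transformation on the region where $f_i$ is the identity, then correct by a gauge transformation so that the lift becomes trivial there and the resulting lifts have disjoint supports. The only difference is the order of operations: the paper lifts $f_i$ directly on the connected sum $X$, observes that $u_i := \hat f_i|_{X\setminus(X_i\setminus B_i)}$ lies in $\gauge$, and extends $u_i$ across $X_i\setminus B_i$ using $\pi_3(U(1))=0$; you instead lift on $X_i$ first, cut the gauge transformation off on $B_i$ using its contractibility, and then glue. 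These are two packagings of the same idea.

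One small slip, harmless for the argument: the central element $-1 \in \Pin^-(2)$ lies in $U(1) \subset \Spin^c(4)$, so the constant gauge transformation it induces acts trivially on $\tilde X_i = P_i/\Spin^c(4)$ and is \emph{not} the deck transformation. This does not matter, because the exact sequence already forces $\tilde f_i^{(0)}|_{B_i} \in \gauge(B_i)$, hence the induced map on $\tilde X_i|_{B_i}$ is automatically the identity and the sheet-swap case you guard against never arises.
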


\begin{proof}
The proof of the case for topological $\Spincm$-structures is similar to the smooth case, so we give the proof only for the smooth case.
Note that we have an exact sequence
\[
1 \to \gauge(X) \to \Aut(X,\fraks) \to \Diff(X, [\fraks]) \to 1,
\]
where $\gauge(X)$ is the gauge group of the $\Spincm$-structure $\fraks$ and $\Diff(X, [\fraks])$ is the group of diffeomorphisms preserving the isomorphism class of $\fraks$.
Take a lift $\hat{f}_{i}$ in $\Aut(X,\fraks)$ of $f_{i}$.
Since $f_{i}$ is supported inside $X_{i} \setminus B_{i}$, we have that
\[
\hat{f}_{i}|_{X \setminus (X_{i} \setminus B_{i})} \in \gauge(X \setminus (X_{i} \setminus B_{i})).
\]
Set $u_{i} = \hat{f}_{i}|_{X \setminus (X_{i} \setminus B_{i})}$.
To complete the proof of the \lcnamecref{lem: reduction to Aut connected sum},
it suffices to show that there exists an extension of each $u_{i}$ to an element of $\gauge(X)$, since then the lifts $\tilde{f}_{i} := u_{i}^{-1} \cdot \hat{f}_{i}$ of $f_{i}$ satisfy the desired property.

To see that $u_{i} \in \gauge(X \setminus (X_{i} \setminus B_{i}))$ can be extended to an element of $\gauge(X)$,
note that we may assume that $\tilde{X}_{i} \to X_{i}$ is the trivial double cover around $B_{i}$ and that $\fraks$ is a trivial $\Spincm$-structure around $B_{i}$.
Then, as noted in \cite[Remark~2.8]{N2},
we may regard $u_{i}|_{\partial B_{i}}$ as a map $u_{i}|_{\partial B_{i}} : S^{3} \to U(1)$, which can be deformed continuously to the constant map onto the identity element in $U(1)$ since $\pi_{3}(U(1))=0$.
This implies that $u_{i}$ can be extended as we desired.
\end{proof}

We can now start the proof of \cref{main appl 1}.
Some of ideas of the construction of a non-smoothable family $E$ with fiber $X$ are based on \cite[Section~2]{N1}, \cite[Section~1]{N2}, \cite[Sections~3, 4]{N0}, \cite[Theorem~4.1]{KKN}, and \cite[Theorem 10.3]{B}.

\begin{proof}[Proof of \cref{main appl 1}]
Let $X$ be as in the statement of \cref{main appl 1}.
Set
\begin{align}
\label{eq: def of N}
&N = \#_{i=1}^{p}(S^{1} \times Y_{i})\#_{j=1}^{q} (S^{2} \times \Si_{j}).
\end{align}
Since the assertion of \cref{main appl 1} is invariant under reversing orientation of $M$, we may assume that $\sigma(M)<0$ without loss of generality.
Then we have $n=b_{+}(M)$.
Note that, since $M$ is assumed to be indefinite, we have $b_{+}(M)>0$.

Recall that the double covers of $N$ are classified by 
\begin{align}
\label{eq: H1N decom}
H^{1}(N;\Z_{2}) &\cong \bigoplus_{i} H^{1}(S^{1} \times Y_{i};\Z_{2}) \bigoplus_{j} H^{1}(S^{2} \times \Si_{j};\Z_{2}).
\end{align}
Let $\tilde{N} \to N$ be the double cover of $N$ corresponding to a cohomology class in $H^{1}(N;\Z_{2})$ whose image under the projection onto each of the direct summands under the decomposition \eqref{eq: H1N decom} does not zero.
Set $\ell^{N} = \tilde{N} \times_{\pm1} \Z$ and
$\ell^{N}_{\R} = \tilde{N} \times_{\pm1} \R$.
Then it follows that 
\begin{align}
\label{eq: properties of N}
b_{2}^{\ell^{N}}(N) = 0, \quad{\rm and }\quad w(\ell^{N}_{\R})^{2}=0.
\end{align}
Let $\tilde{X} \to X$ be the fiberwise connected sum of the trivial double cover $M \sqcup M \to M$ and $\tilde{N} \to N$.
Set $\ell=\tilde{X} \times_{\pm1}\Z$ and $\ell_{\R}=\tilde{X} \times_{\pm1}\R$.
Then we have
\begin{align}
\label{eq: HXMN}
H^{2}(X;\ell) \cong H^{2}(M;\Z) \oplus H^{2}(N;\ell^{N})
\end{align}
and
\begin{align}
\label{eq: comparison w1}
w_{1}(\ell_{\R})^{2} = (0,w_{1}(\ell^{N}_{\R})^{2})
\end{align}
through \eqref{eq: HXMN}, and also have
\[
b_{+}^{\ell}(X)=b_{+}(M)=n.
\]
It follows from \eqref{eq: properties of N} and \eqref{eq: comparison w1} that
\begin{align}
\label{eq: rewritten the assumption}
w_{2}(X) + w_{1}(\ell_{\R})^{2} = w_{2}(M)
\end{align}
since we have $w_{2}(N)=0$.
Below we consider the case that $M$ is spin and that $M$ is non-spin separately.

First, let us consider the case that $M$ is spin.
In this case, $M$ is homeomorphic to 
\begin{align}
\label{eq: top mfd spin}
2m(-E_{8}) \# nS^{2} \times S^{2}
\end{align}
for some $m$ by Freedman's theory, where $-E_{8}$ denotes the negative-definite $E_{8}$-manifold.
Note that we have $m>0$ since we have assumed that $\sigma(M)<0$ (actually we also have $n \geq 2m+1$ by Furuta's 10/8-inequality, but this fact is not necessary here).
Henceforth we shall identify $M$ with \eqref{eq: top mfd spin} as topological manifold.

As noted in \cite[Example~3.3]{N0}, one may easily find an orientation-preserving self-diffeomorphism $\varrho : S^2\times S^2 \to S^2\times S^2$ satisfying the following two properties:
\begin{itemize}
\item There exists a $4$-ball $B$ embedded in $S^2\times S^2$ such that the restriction of $\varrho$ on a neighborhood of $B$ is the identity map.
\item $\varrho$ reverses orientation of $H^+(S^2\times S^2)$.
\end{itemize}
Let $f_{1}, \ldots, f_{n-1}$ be copies of $\varrho$ on each connected summand of $(n-1)(S^{2} \times S^{2})$, and let us extend them as homeomorphisms of $M$ and $X$ by identity over the other connected sum factors.
Since $f_{1}, \ldots, f_{n-1}$ commute with each other, we can form the multiple mapping torus
\[
X \to E \to T^{n-1}
\]
of $f_{1}, \ldots, f_{n-1}$.
This family $E$ is a $\Homeo(X)$-bundle, for which we shall show non-smoothability.
We argue by contradiction and suppose that the family $X \to E \to T^{n-1}$ has a reduction of structure group to $\Diff(X)$.

Let $M \to E_{M} \to T^{n-1}$ denote the multiple mapping torus of $f_{1}, \ldots, f_{n-1}$ restricted to $M$.
Then the family $E$ is the fiberwise connected sum of $E_{M}$ and the trivialized bundle $T^{n-1} \times N \to N$.
As in the proof of \cite[Theorem~10.3]{B}, it is easy to see that $w_{n-1}(H^{+}(E_{M})) \neq 0$.
This non-vanishing together with \eqref{eq: properties of N} and \eqref{eq: HXMN} implies that 
\begin{align}
\label{eq: non-vanishing in main app}
w_{n-1}(H^+(E,\ell))\neq 0 \text{ in } H^{n-1}(B;\Z_{2}).
\end{align}

Since now we have $w_{2}(M)=0$, it follows from \cref{lem: suff cond spinc-} and the equation \eqref{eq: rewritten the assumption} that there exists a $\Spincm$-structure $\mathfrak{s}$ on $\tilde{X}\to X$ such that $\tilde{c}_1(\fraks)=0$.
More precisely, we may take $\mathfrak{s}$ to be trivial on the conneced summand $M$ in $X$.
Here we note the following \lcnamecref{lem: reduction to Aut}:

\begin{lem}
\label{lem: reduction to Aut}
The family $E$ has a reduction  of structure group to $\Aut(X,\mathfrak{s})$, provided that
$E$ has a reduction of structure group to $\Diff(X)$.
\end{lem}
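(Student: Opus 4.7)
The plan is to first construct a families \emph{topological} $\Spincm$-structure on the mapping-torus bundle $E\to T^{n-1}$, and then to appeal to the principle, recalled just before \cref{lem: suff cond spinc-}, that a families topological $\Spincm$-structure on a smoothable continuous manifold bundle induces a families smooth $\Spincm$-structure in the usual sense. Once a topological $\Spincm$-structure on $E$ is in place, the assumed smooth reduction will automatically upgrade it to a reduction of structure group to $\Aut(X,\mathfrak{s})$.

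To produce the families topological $\Spincm$-structure, I would first check that each homeomorphism $f_i$ preserves the isomorphism class of $\mathfrak{s}$. By construction, $\mathfrak{s}$ can be chosen so that both the double cover $\tilde X\to X$ and $\tilde c_1(\mathfrak{s})$ are trivial on the $M$-summand, so the restriction of $\mathfrak{s}$ to the $i$-th $S^2\times S^2$ summand is the unique (up to isomorphism) $\mathrm{Spin}^c$-structure on this simply-connected spin $4$-manifold with vanishing first Chern class. Any orientation-preserving self-homeomorphism of $S^2\times S^2$ preserves this structure, and since $f_i$ acts as the identity outside the $i$-th $S^2\times S^2$ summand, it preserves $[\mathfrak{s}]$ globally.

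Next, I would invoke the topological version of \cref{lem: reduction to Aut connected sum} for the connected-sum decomposition in which, for $i=1,\dots,n-1$, the $i$-th piece is the $i$-th copy of $S^2\times S^2$, and the remaining piece absorbs $2m(-E_{8})$, the last $S^2\times S^2$, and $N$. Each $f_i$ fixes a ball in its piece by construction, so the hypotheses are satisfied and the lemma supplies commuting lifts $\tilde f_1,\dots,\tilde f_{n-1}$ in the topological automorphism group of the pair $(X,\mathfrak{s})$. Taking their multiple mapping torus produces the required families topological $\Spincm$-structure on $E$.

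The main point to be careful about is the topological analogue of \cref{lem: reduction to Aut connected sum}; this in turn relies on a workable notion of a topological $\Spincm$-gauge group and on a topological version of the fact that a $\mathrm{U}(1)$-valued map on $S^{3}$ extends across a $4$-ball, both being straightforward adaptations of the smooth argument used in \cref{lem: reduction to Aut connected sum}. With that in hand, combining the resulting families topological $\Spincm$-structure with the assumed smooth reduction of $E$ to $\Diff(X)$ gives the desired reduction to $\Aut(X,\mathfrak{s})$.
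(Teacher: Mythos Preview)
Your proposal is correct and follows essentially the same route as the paper: verify that each $f_i$ preserves the isomorphism class of $\mathfrak{s}$ (using that $\mathfrak{s}$ is trivial on $M$), apply the topological version of \cref{lem: reduction to Aut connected sum} to obtain commuting lifts, and then pass from the resulting families topological $\Spincm$-structure to a smooth one via the smoothability hypothesis. The paper's own proof compresses all of this into two sentences, simply citing \cref{lem: reduction to Aut connected sum}; your write-up makes the intermediate step through topological $\Spincm$-structures explicit, which is helpful since the $f_i$ are only homeomorphisms of $X$.
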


\begin{proof}
Since the $\Spincm$-structure $\fraks$ on the conneced summand $M$ in $X$ is trivial,
each $f_{i}$ obviously preserves the isomorphism class of the resrtriction of the topological $\Spincm$-structure $\fraks$ on the $i$-th conneced summand of $n(S^{2} \times S^{2})$.
Therefore this \lcnamecref{lem: reduction to Aut} follows from
\cref{lem: reduction to Aut connected sum}.
\end{proof}

We can now complete the proof of \cref{main appl 1} in the case that $M$ is spin.
By \eqref{eq: non-vanishing in main app} and \cref{lem: reduction to Aut},
the family $X \to E \to T^{n-1}$ satisfies the assumption of \cref{main thm 2}, thus we have $\sigma(X) \geq 0$.
However $\sigma(X) = \sigma(M)$ holds and we assumed that $\sigma(M)<0$.
This is a contradiction, and hence $E$ is non-smoothable.

Next, let us consider the case that $M$ is not spin.
Some of arguments here are very similar to the spin case above.
Denote by $-\CP^{2}_{\fake}$ the closed simply-connected topological $4$-manifold whose intersection form is $(-1)$ and whose Kirby--Siebenmann class does not vanish.
Then $M$ is homeomorphic to
\[
m(-\CP^{2}) \# (-E_{8}) \#(-\CP^{2}_{\fake})\# n(S^{2} \times S^{2})
\]
for some $m \geq 0$ and $n>0$.
Let $f_{1}, \ldots, f_{n}$ be the commuting self-diffeomorphisms of $n(S^{2} \times S^{2})$ obtained as copies of $\varrho$ above as well as the spin case, and extending them as self-homeomorphisms of $X$ by identity, we may obtain a continuous family $X \to E \to T^{n}$ as the multiple mapping torus.
Similar to the spin case, we argue by contradiction and suppose that the family $X \to E \to T^{n}$ has a reduction of structure group to $\Diff(X)$.

Let $M \to E_{M} \to T^{n}$ denote the multiple mapping torus of $f_{1}, \ldots, f_{n}$ restricted to $M$.
Then it is easy to see that $e(H^{+}(E_{M}, \Z_{w_{1}(H^{+}(E_{M}))})) \neq 0$, where $\Z_{w_{1}(H^{+}(E_{M}))}$ denotes the local system with coefficient group $\Z$ determined by $w_{1}(H^{+}(E_{M}))$.
This observation together with \eqref{eq: properties of N} and \eqref{eq: HXMN} implies that 
\begin{align}
\label{eq: non-vanishing euler in main app}
w_{n}(H^+(E,\ell))\neq 0 \text{ in } H^{n}(B;\Z_{2}).
\end{align}

Let $C \in H^{2}(X;\ell)$ be a cohomology class expressed as
\[
C = (e_{1}, \ldots, e_{m}, 0, e, 0,0)
\]
under the direct sum decomposition of $H^{2}(X;\ell)$ into
\[
H^{2}(-\CP^{2};\Z)^{\oplus m} \oplus H^{2}(-E_{8};\Z) \oplus H^{2}(-\CP^{2}_{\fake};\Z) \oplus H^{2}(n(S^{2} \times S^{2});\Z) \oplus H^{2}(N;\ell^{N}),
\]
where $e_{i}$ and $e$ denote a generator of $H^{2}(-\CP^{2};\Z)$ and that of $H^{2}(-\CP^{2}_{\fake};\Z)$ respectively.
Then $C$ satisfies that $[C]_{2} = w_{2}(M)$.
Therefore it follows from \cref{lem: suff cond spinc-} and \eqref{eq: rewritten the assumption} that 
there exists a $\Spincm$-structure $\mathfrak{s}$ on $\tilde{X}\to X$ such that $\tilde{c}_1(\fraks)=C$.

As well as \cref{lem: reduction to Aut}, the structure group of $E$ lifts to $\Aut(X,\fraks)$ provided that $E$ is smoothable.
Therefore by \eqref{eq: non-vanishing euler in main app} we may apply \cref{main thm 1} to this family, and thus we have $\tilde{c}_1(\fraks)^2 \leq \sigma(X)$.
However it follows from a direct calculation that $\tilde{c}_1(\fraks)^2 = C^{2} = -m-1$ and $\sigma(X) = \sigma(M) = -m-9$.
This is a contradiction, and hence $E$ is non-smoothable.
This completes the proof of \cref{main appl 1}.
\end{proof}

\begin{proof}[Proof of \cref{main appl 2}]
The proof is very similar to that of \cref{main appl 1} above.
Let $X$ be as in the statement of \cref{main appl 2} and $M'=mS\#M$.
Define $N$ by \eqref{eq: def of N}.
Recall that an Enriques surface $S$ is homeomorphic to $-E_{8} \# (S^{2} \times S^{2}) \# W$, where $W$ is a non-spin topological rational homology $4$-sphere with $\pi_{1}(W) \cong \Z/2$ and with non-trivial Kirby--Siebenmann invariant.
Hence $mS$ is homeomorphic to 
\[
m(-E_{8}) \# mS^{2} \times S^{2} \# mW.
\]
If $M$ is non-spin, then $M$ is homemorphic to 
$a\CP^{2}\#b(-\CP^{2})$ for some $a,b \geq 0$, and 
if $M$ is spin, then $M$ is homemorphic to
$a(S^{2} \times S^{2})\# 2b(-E_{8}) \# $
for some $a,b \geq 0$, since we assumed $\sigma(M) \leq 0$ in the spin case.
Let us repeat the argument in the proof of \cref{main appl 1} until getting the equation \eqref{eq: rewritten the assumption} under replacing $M$ with $M'$.

First, let us assume that $M$ is spin.
Then $M'$ is homeomorphic to
\[
(m+2b)(-E_{8})\#nS^{2} \times S^{2} \#mW,
\]
where $n=a+m$.
Let $f_{1}, \ldots, f_{n}$ be the commuting self-diffeomorphisms of $n(S^{2} \times S^{2})$ obtained as copies of $\varrho$ given in the proof of \cref{main appl 1}, and extending them as self-homeomorphisms of $X$ by identity, we may obtain a continuous family $X \to E \to T^{n}$ as the multiple mapping torus.
We argue by contradiction and suppose that the family $X \to E \to T^{n}$ has a reduction of structure group to $\Diff(X)$.
First, note that we again obtain \eqref{eq: non-vanishing euler in main app} similarly.
Let $\alpha \in H^{2}(S;\Z)$  be the cohomology class given by
$\alpha = (0,1) \in H^{2}(S;\Z)$ under the direct sum decomposition
\[
H^{2}(S;\Z) \cong H^{2}(-E_{8} \# S^{2} \times S^{2};\Z) \oplus H^{2}(W;\Z),
\]
where $H^{2}(W;\Z)$ is known to be isomorphic to $\Z/2\Z$ and $1 \in H^{2}(W;\Z)$ denotes the unique non-trivial element.
Let $C \in H^{2}(X;\ell)$ be the cohomology class given by
\[
C = (0, \alpha_{1}, \ldots, \alpha_{m}, 0)
\]
under the decomposition of $H^{2}(X;\ell)$ into
\[
H^{2}((m+2b)(-E_{8}) \# nS^{2} \times S^{2};\Z) \oplus H^{2}(W;\Z)^{\oplus m} \oplus H^{2}(N;\ell^{N}),
\]
where $\alpha_{i}$ are copies of $\alpha$.
Then $C$ satisfies that $[C]_{2} = w_{2}(M')$.
Then we can deduce from an argument similar to the proof of \cref{main appl 1} that $C^{2} \leq \sigma(X)$ using \cref{main thm 1}.
However it follows from a direct calculation that $C^{2}=0$ and $\sigma(X) = -8(m+2b)$.
This is a contradiction, and hence $E$ is non-smoothable.
This completes the proof of \cref{main appl 2} in the spin case.

Next, let us assume that $M$ is non-spin.
The proof is similar to the spin case above.
First, note that $M'$ is homeomorphic to
\[
m(-E_{8})\#n\CP^{2} \# n'(-\CP^{2}) \#mW,
\]
where $n=a+m$ and $n'=b+m$.
Let $\rho$ be an orientation-preserving self-diffeomorphism of $\CP^{2}$ satisfying the following two properties:
\begin{itemize}
\item There exists a $4$-ball $B$ embedded in $\CP^{2}$ such that the restriction of $\varrho$ on a neighborhood of $B$ is the identity map.
\item $\varrho$ reverses orientation of $H^+(\CP^{2})$.
\end{itemize}
One may get an example of such $\rho$ as follows: let $\rho' : \CP^{2} \to \CP^{2}$ the complex conjugation $[z_{0}:z_{1}:z_{2}] \mapsto [\bar{z}_{0}:\bar{z}_{1}:\bar{z}_{2}]$.
Take a point from the fixed point set $\RP^{2} \subset \CP^{2}$ of $\rho'$, and deform $\rho'$ by isotopy around the point to obtain a fixed ball $B$.
This deformed self-diffeomorphism $\rho$ satisfies the desired conditions.
Let $f_{1}, \ldots, f_{n}$ be the commuting self-diffeomorphisms of $n\CP^{2}$ obtained as copies of $\varrho$, and extending them as self-homeomorphisms of $X$ by identity, we may obtain a continuous family $X \to E \to T^{n}$ from $f_{1}, \ldots, f_{n}$ as well.
Suppose that the family $X \to E \to T^{n}$ has a reduction of structure group to $\Diff(X)$.
We again obtain \eqref{eq: non-vanishing euler in main app} similarly.
Let $e$ and $\bar{e}$ are generators of $H^{2}(\CP^{2};\Z)$ and $H^{2}(-\CP^{2};\Z)$ respectively.
Let $C \in H^{2}(X;\ell)$ be the cohomology class given by
\[
C = (0, e_{1}, \ldots, e_{n}, \bar{e}_{1}, \ldots, \bar{e}_{n'},\alpha_{1}, \ldots, \alpha_{m}, 0)
\]
under the decomposition of $H^{2}(X;\ell)$ into
\[
H^{2}(m(-E_{8});\Z) \# H^{2}(n\CP^{2};\Z)\# H^{2}(n'(-\CP^{2});\Z)
\oplus H^{2}(W;\Z)^{\oplus m} \oplus H^{2}(N;\ell^{N}),
\]
where $e_{i}$ and $\bar{e}_{j}$ are copies of $e$ and $\bar{e}$ respectively.
Then $C$ satisfies that $[C]_{2} = w_{2}(M')$, and
we can deduce that $C^{2} \leq \sigma(X)$ using \cref{main thm 1}.
However it follows from a direct calculation that $C^{2}=n-n'$ and $\sigma(X) = -8m+n-n'$.
This is a contradiction, and hence $E$ is non-smoothable.
This completes the proof of \cref{main appl 2} in the non-spin case.
\end{proof}

\begin{bibdiv}
 \begin{biblist}
 
\bib{B}{article}{
	author = {Baraglia, D.},
	title = {Constraints on families of smooth 4-manifolds from Bauer-Furuta invariants},
	eprint = {arXiv:1907.03949},
}

\bib{BK0}{article}{
	author = {Baraglia, D.},
	author = {Konno, H.},
	title = {A gluing formula for families Seiberg-Witten invariants},
	eprint = {arXiv:1812.11691},
	status={to appear in Geometry \& Topology},
}

\bib{BK}{article}{
	author = {Baraglia, D.},
	author = {Konno, H.},	
	title = {On the Bauer-Furuta and Seiberg-Witten invariants of families of 4-manifolds},
	eprint = {arXiv:1903.01649},
}

\bib{BF}{article}{
   author={Bauer, Stefan},
   author={Furuta, Mikio},
   title={A stable cohomotopy refinement of Seiberg-Witten invariants. I},
   journal={Invent. Math.},
   volume={155},
   date={2004},
   number={1},
   pages={1--19},
   issn={0020-9910},
   review={\MR{2025298}},
   doi={10.1007/s00222-003-0288-5},
}

\bib{D}{article}{
   author={Donaldson, S. K.},
   title={Polynomial invariants for smooth four-manifolds},
   journal={Topology},
   volume={29},
   date={1990},
   number={3},
   pages={257--315},
   issn={0040-9383},
   review={\MR{1066174}},
   doi={10.1016/0040-9383(90)90001-Z},
}

\bib{F}{article}{
   author={Furuta, M.},
   title={Monopole equation and the $\frac{11}8$-conjecture},
   journal={Math. Res. Lett.},
   volume={8},
   date={2001},
   number={3},
   pages={279--291},
   issn={1073-2780},
   review={\MR{1839478}},
   doi={10.4310/MRL.2001.v8.n3.a5},
}

\bib{KKN}{article}{
	author = {Kato, T.},
	author = {Konno, H.},
	author = {Nakamura, N.},
	title = {Rigidity of the mod 2 families Seiberg-Witten invariants and topology of families of spin 4-manifolds},
	eprint = {arXiv:1906.02943},
}

\bib{KM}{article}{
	author = {Kronheimer, P. B.},
	author = {Mrowka, T. S.},
	title = {The Dehn twist on a sum of two K3 surfaces},
	eprint = {arXiv:2001.08771},
}

\bib{MS}{article}{
   author={Morgan, John W.},
   author={Szab\'{o}, Zolt\'{a}n},
   title={Homotopy $K3$ surfaces and mod $2$ Seiberg-Witten invariants},
   journal={Math. Res. Lett.},
   volume={4},
   date={1997},
   number={1},
   pages={17--21},
   issn={1073-2780},
   review={\MR{1432806}},
   doi={10.4310/MRL.1997.v4.n1.a2},
}

\bib{N0}{article}{
   author={Nakamura, Nobuhiro},
   title={Smoothability of $\Bbb Z\times\Bbb Z$-actions on $4$-manifolds},
   journal={Proc. Amer. Math. Soc.},
   volume={138},
   date={2010},
   number={8},
   pages={2973--2978},
   issn={0002-9939},
   review={\MR{2644908}},
   doi={10.1090/S0002-9939-10-10413-4},
}

\bib{N1}{article}{
   author={Nakamura, Nobuhiro},
   title={$\rm{Pin}^-(2)$-monopole equations and intersection forms with
   local coefficients of four-manifolds},
   journal={Math. Ann.},
   volume={357},
   date={2013},
   number={3},
   pages={915--939},
   issn={0025-5831},
   review={\MR{3118618}},
   doi={10.1007/s00208-013-0924-3},
}

\bib{N2}{article}{
   author={Nakamura, Nobuhiro},
   title={$\rm{Pin}^-(2)$-monopole invariants},
   journal={J. Differential Geom.},
   volume={101},
   date={2015},
   number={3},
   pages={507--549},
   issn={0022-040X},
   review={\MR{3415770}},
}

\bib{R}{article}{
   author={Ruberman, Daniel},
   title={An obstruction to smooth isotopy in dimension $4$},
   journal={Math. Res. Lett.},
   volume={5},
   date={1998},
   number={6},
   pages={743--758},
   issn={1073-2780},
   review={\MR{1671187 (2000c:57061)}},
   doi={10.4310/MRL.1998.v5.n6.a5},
}

\end{biblist}
\end{bibdiv} 

\end{document}